\numberwithin{equation}{section}
\numberwithin{figure}{section}
\theoremstyle{plain}
\newtheorem{thm}{\protect\theoremname}[section]
  \theoremstyle{definition}
  \newtheorem{defn}[thm]{\protect\definitionname}
  \theoremstyle{plain}
  \newtheorem{prop}[thm]{\protect\propositionname}
  \theoremstyle{remark}
  \newtheorem{rem}[thm]{\protect\remarkname}
  \theoremstyle{plain}
  \newtheorem{lem}[thm]{\protect\lemmaname}
\newcommand{\be }{\begin{equation}}
\newcommand{\ee }{\end{equation}}
\newcommand {\emptycomment}[1]{} 
\newcommand{\br}[1]{   [ \cdot,    \cdot  ]   }
  \providecommand{\definitionname}{Definition}
  \providecommand{\lemmaname}{Lemma}
  \providecommand{\propositionname}{Proposition}
  \providecommand{\remarkname}{Remark}
\providecommand{\theoremname}{Theorem}
\begin{document}

\title{Derived brackets for fat Leibniz algebras}

\author{Xiongwei Cai and Zhangju Liu}

\address{Mathematics Research Unit, FSTC, University of Luxembourg, Luxembourg}

\email{shernvey@gmail.com}

\address{Department of Mathematics, Peking University, Beijing}

\email{liuzj@pku.edu.cn}

\keywords{Leibniz algebras, representable cochains, derived brackets}
\begin{abstract}
Given a Leibniz algebra $L$ with left center $Z$, we work on $C(L,Z,S^{\bullet}(Z))$,
the $Z$-standard complex of $L$ with coefficients in $S^{\bullet}(Z)$.
We construct the derived bracket for a fat Leibniz algebra in terms
of a certain 3-cocycle and a Poisson algebra structure on the space
of so-called ``representable cochains''.
\end{abstract}
\maketitle

\section{Introduction}

Leibniz algebras, objects that first appeared in Bloh's work \cite{Bloh}
and named by Loday \cite{Loday}, can be viewed as the noncommutative
analogue of Lie algebras. Some theorems and properties of Lie algebras
have been proved to be still valid for Leibniz algebras, while many
other questions are still open. 

Courant algebroids, first introduced by Liu, Weinstein and Xu in \cite{LiuWX},
can be viewed as the geometric realization of Leibniz algebras. The
algebraization of Courant algebroids, Courant-Dorfman algebras, are
special examples of Leibniz algebras.

The derived bracket for a Lie algebra with an ad-invariant inner product
is constructed by Lecomte-Roger \cite{LecomteRoger} and Kosmann-Schwarzbach
\cite{KS}, in order to study the homological algebra of Lie bialgebras
and quasi-Lie bialgebras, respectively. While the construction of
derived bracket for a Courant algebroid was given by Kosmann-Schwarzbach
\cite{KS04}, Royternberg \cite{Roytenberg02} and Alekseev-Xu \cite{AlekseevXu}.

It is a natural question to ask whether there is a derived bracket
construction for Leibniz algebras. In this paper, we succeed to give
a positive answer for fat Leibniz algebras. By a fat Leibniz algebra,
we mean a Leibniz algebra whose naturally defined symmetric product
is non-degenerate. Note that this is a different notion from a quadratic
Leibniz algebra, defined by Benayadi-Hidri \cite{BenayadiHidri}.

Given a Leibniz algebra $L$ with left center $Z$, we will work on
the $H$-standard complex (see Cai \cite{Cai}) of $L$ in the particular
case when $H=Z,\ V=S^{\bullet}(Z)$. We will define a canonical 3-cocycle
$\Theta$ and prove that the subcomplex consisting of the so-called
``representable cochains'' is a graded Poisson algebra. Finally
we show that the Leibniz bracket of a fat Leibniz algebra can be represented
by a derived bracket.

\subsection*{Acknowledgements}

This paper is based on the PhD dissertation of the first author, which
is funded by the University of Luxembourg. The first author would
like to thank his advisors, Prof. Martin Schlichenmaier and Prof.
Ping Xu, for their continual encouragement and support.

\section{Standard complex}

In this section, we recall the definition of $H$-standard complex
of a Leibniz algebra $L$ with coefficients in $V$ (\cite{Cai}),
and consider a 3-cocycle in the particular case when $H=Z,\ V=S^{\bullet}(Z)$.

Given a Leibniz algebra $L$ with left center $Z$, let $H\supseteq Z$
be an isotropic ideal in $L$, and $(V,\tau)$ be an $H$-trivial
representation of $L$ (i.e. a left representation of $L$ on which
$H$ acts trivially).

Denote by $C^{n}(L,H,V)$ the space of all sequences $\omega=(\omega_{0},\cdots,\omega_{[\frac{n}{2}]})$,
where $\omega_{k}$ is a linear map from $(\otimes^{n-2k}L)\otimes(\odot^{k}H)$
to $V$, $\forall k$, and is weakly skew-symmetric in arguments of
$L$ up to $\omega_{k+1}$:
\begin{eqnarray*}
 &  & \omega_{k}(e_{1},\cdots e_{i},e_{i+1,}\cdots e_{n-2k};h_{1},\cdots h_{k})+\omega_{k}(e_{1},\cdots e_{i+1},e_{i,}\cdots e_{n-2k};h_{1},\cdots h_{k})\\
 & = & -\omega_{k+1}(\cdots\widehat{e_{i}},\widehat{e_{i+1}},\cdots;(e_{i},e_{i+1}),\cdots)\qquad\forall e\in L,\ h\in H
\end{eqnarray*}
$C(L,H,V)\triangleq\bigoplus_{n}C^{n}(L,H,V)$ becomes a cochain complex
under the coboundary map $d=d_{0}+\delta+d^{\prime}$, called the
$H$-standard complex of $L$ with coefficients in $V$, where $d_{0},\delta,d^{\prime}$
are defined for any $\omega\in C^{n}(L,H,V)$ respectively by:
\begin{eqnarray*}
(d_{0}\omega)_{k}(e_{1},\cdots,e_{n+1-2k};h_{1},\cdots h_{k}) & \triangleq & \sum_{a}(-1)^{a+1}\rho(e_{a})\omega_{k}(\cdots\widehat{e_{a}},\cdots;\cdots)\\
 &  & +\sum_{a<b}(-1)^{a}\omega_{k}(\cdots\widehat{e_{a}},\cdots e_{a}\circ e_{b},\cdots;\cdots)\\
(\delta\omega)_{k}(e_{1},\cdots,e_{n+1-2k};h_{1},\cdots h_{k}) & \triangleq & \sum_{j}\omega_{k-1}(\alpha_{j},e_{1},\cdots e_{n+1-2k};\cdots\widehat{h_{j}},\cdots)\\
(d^{\prime}\omega)_{k}(e_{1},\cdots e_{n+1-2k};h_{1},\cdots h_{k}) & \triangleq & \sum_{a,j}(-1)^{a+1}\omega_{k}(\cdots\widehat{e_{a}},\cdots;\cdots\widehat{h_{j}},h_{j}\circ e_{a},\cdots).
\end{eqnarray*}

The Leibniz bracket of $L$ induces a left action $\rho$ of $L$
on $Z$: $\rho(e)f\triangleq e\circ f$, $\forall e\in L,\ f\in Z$.
And it can be extended by Leibniz rule to a left action of $L$ on
the symmetric tensor $S^{\bullet}(Z)$, still denoted by $\rho$.
$(S^{\bullet}(Z),\rho)$ is obviously a $Z$-trivial representation
of $L$, so we have the $Z$-standard complex $(C(L,Z,S^{\bullet}(Z)),d)$.
Note that $d^{\prime}$ is $0$, so $d=d_{0}+\delta$ in this case.
\begin{defn}
$(C(L,Z,S^{\bullet}(Z)),d=d_{0}+\delta)$ is called the standard complex
of $L$. 
\end{defn}
For simplicity, we will denote $C(L,Z,S^{\bullet}(Z))$ by $C(L)$
from now on.
\begin{prop}
$C(L)$ is a differential graded commutative algebra, with the multiplication
map defined for $\omega\in C^{n}(L),\ \eta\in C^{m}(L)$ by:
\begin{eqnarray}
 &  & (\omega\cdot\eta)_{k}(e_{1},\cdots,e_{n+m-2k};f_{1},\cdots,f_{k})\label{eq:multiplication}\\
 & \triangleq & \sum_{{i+j=k\atop {\sigma\in sh(n-2i,m-2j)\atop \mu\in sh(i,j)}}}(-1)^{\sigma}\omega_{i}(e_{\sigma(1)}\cdots e_{\sigma(n-2i)};f_{\mu(1)}\cdots f_{\mu(i)})\eta_{j}(e_{\sigma(n-2i+1)}\cdots;f_{\mu(i+1)}\cdots),\nonumber 
\end{eqnarray}
$\forall e\in L,f\in Z$, where $sh(\ ,\ )$ means the shuffle permutation.\end{prop}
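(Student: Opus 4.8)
The plan is to verify three things in turn: that the formula \eqref{eq:multiplication} actually lands in $C^{n+m}(L)$, i.e. that the shuffle product of two weakly skew-symmetric sequences is again weakly skew-symmetric; that the product is associative, unital and graded commutative; and finally that $d=d_0+\delta$ is a graded derivation for this product. I expect the first of these to be the main obstacle, since weak skew-symmetry is a non-trivial relation tying together consecutive components $\omega_k$ and $\omega_{k+1}$ through the symmetric product, and one must show this survives the double shuffle.

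For well-definedness I would fix $\omega\in C^{n}(L)$, $\eta\in C^{m}(L)$ and transpose two adjacent $L$-entries $e_{l},e_{l+1}$ in $(\omega\cdot\eta)_{k}$, aiming to show that the symmetrization equals $-(\omega\cdot\eta)_{k+1}$ with the symmetric product $(e_{l},e_{l+1})\in Z$ adjoined to the $Z$-arguments. The key is to organize the shuffle sum according to how the pair $\{e_{l},e_{l+1}\}$ is distributed. Because a shuffle preserves the relative order inside each block, if both entries are routed into the same factor they necessarily occupy adjacent slots there; summing the two orderings then lets me invoke the weak skew-symmetry of $\omega_{i}$ (resp. $\eta_{j}$), which collapses the pair into the single $Z$-argument $(e_{l},e_{l+1})$ and raises the index to $i+1$ (resp. $j+1$). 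These assemble exactly into the $(i',j')=(i+1,j)$ and $(i,j+1)$ contributions of $(\omega\cdot\eta)_{k+1}$, and carry the overall sign $-1$ demanded by the defining relation. If instead the two entries are routed into different blocks, I would pair the term coming from the ordering $(\dots e_{l},e_{l+1}\dots)$ with the term from $(\dots e_{l+1},e_{l}\dots)$ in which $e_{l},e_{l+1}$ are sent to the opposite slots: the two evaluations coincide, while the shuffle signs $(-1)^{\sigma}$ differ by precisely the single inversion introduced at the pair $(l,l+1)$, so these contributions cancel. The delicate part here is the bookkeeping of $(-1)^{\sigma}$ together with the insertion of $(e_{l},e_{l+1})$ into the $Z$-shuffle $\mu$.

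Next I would dispatch the algebra axioms. Associativity follows from the standard fact that composing shuffles of the $L$-arguments (with Koszul sign) and of the $Z$-arguments (with no sign), together with the associative commutative product of $S^{\bullet}(Z)$, yields the same triple shuffle independently of the bracketing; the constant $1\in S^{0}(Z)=C^{0}(L)$ is the unit. For graded commutativity I would interchange the roles of $\omega$ and $\eta$: the product in $S^{\bullet}(Z)$ is commutative, the $Z$-shuffle $\mu\in sh(i,j)$ becomes an element of $sh(j,i)$ without sign, and reversing the $L$-shuffle contributes $(-1)^{(n-2i)(m-2j)}=(-1)^{nm}$, which is independent of $i$ and $j$. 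Hence $\omega\cdot\eta=(-1)^{nm}\,\eta\cdot\eta$ -- more precisely $\omega\cdot\eta=(-1)^{nm}\,\eta\cdot\omega$ -- so $C(L)$ is graded commutative for the total degree $n$.

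Finally I would check the Leibniz rule $d(\omega\cdot\eta)=(d\omega)\cdot\eta+(-1)^{n}\,\omega\cdot(d\eta)$ by treating $d_0$ and $\delta$ separately. For $d_0$, the action part is a derivation precisely because $\rho$ acts on $S^{\bullet}(Z)$ by derivations, being the Leibniz-rule extension of the $L$-action on $Z$, so that $\rho(e_{a})\big(\omega_{i}(\cdots)\eta_{j}(\cdots)\big)$ splits across the two factors; the bracket part $\sum_{a<b}(-1)^{a}(\cdots e_{a}\circ e_{b}\cdots)$ is a derivation by the usual cancellation of the cross-block terms via the shuffle signs, exactly as in Leibniz-algebra cohomology. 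For $\delta$, which promotes one symmetric $Z$-argument to a leading $L$-tensor slot, I would check directly that it distributes over \eqref{eq:multiplication}: a $Z$-argument of the product originates from either the $\omega$- or the $\eta$-factor, and after promoting it and re-expanding the shuffles one recovers $(\delta\omega)\cdot\eta+(-1)^{n}\,\omega\cdot(\delta\eta)$, the sign $(-1)^{n}$ being the Koszul sign incurred when the promoted argument comes from the $\eta$-factor and is carried to the front past the $\omega$-block. Combining the two gives that $d$ is a graded derivation, completing the verification that $C(L)$ is a differential graded commutative algebra.
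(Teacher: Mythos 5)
Your proposal follows the paper's proof essentially step for step: closure under the product via weak skew-symmetry (same-block pairs collapse into the $(k+1)$-component, cross-block pairs cancel against the transposed sum through a single-inversion sign flip), associativity via the triple shuffle, graded commutativity by the sign count $(-1)^{(n-2i)(m-2j)}=(-1)^{nm}$, and the Leibniz rule for $d=d_{0}+\delta$ treated separately for the two pieces, using the derivation property of $\rho$ on $S^{\bullet}(Z)$ for the action terms and the Koszul sign $(-1)^{n}$ for $\delta$, exactly as the paper does.

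One point is misdescribed, although it does not break the argument: in the $d_{0}$-part of the Leibniz rule there are no ``cross-block terms that cancel.'' Since $e_{a}$ is deleted before the shuffling takes place, every bracket term of $d_{0}(\omega\cdot\eta)$ carries only the single entry $e_{a}\circ e_{b}$, and its block assignment already determines whether the term matches a bracket term of $(d_{0}\omega)\cdot\eta$ or of $(-1)^{n}\,\omega\cdot(d_{0}\eta)$; the identity is a sign-preserving bijection obtained by re-inserting the index $a$ into the corresponding block (this is the paper's passage from $\sigma$ to the extended shuffles $\sigma_{1},\sigma_{2}$ with the sign $(-1)^{\sigma_{1}+\sigma_{1}^{-1}(a)-a}$), not a cancellation. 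The cancellation phenomenon you invoke genuinely occurs in the closure step, where the two orderings $(\ldots e_{l},e_{l+1}\ldots)$ and $(\ldots e_{l+1},e_{l}\ldots)$ are summed, but not in the derivation step, where only one ordering is present; when you write out the $d_{0}$ computation you will need the reinsertion/reindexing bookkeeping rather than a pairing of cancelling terms.
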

\begin{proof}
The multiplication map above is obviously graded commutative, i.e.
\[
\omega\cdot\eta=(-1)^{nm}\eta\cdot\omega.
\]
We give the proof in 3 steps.

Step 1:

$C(L)$ is closed under the multiplication, i.e. $\omega\cdot\eta\in C^{n+m}(L)$:
\begin{eqnarray*}
 &  & (\omega\cdot\eta)_{k}(\cdots e_{a},e_{a+1},\cdots;f_{1},\cdots f_{k})+(\omega\cdot\eta)_{k}(\cdots e_{a+1},e_{a},\cdots;f_{1},\cdots f_{k})\\
 & = & \sum_{{i+j=k\atop \tau\in sh(i,j)}}\sum_{{\sigma\in sh(n-2i,m-2j)\atop \sigma^{-1}(a),\sigma^{-1}(a+1)\leq n-2i}}(-1)^{\sigma}\big(\omega_{i}(\cdots e_{a},e_{a+1},\cdots;\cdots)+\omega_{i}(\cdots e_{a+1},e_{a},\cdots;\cdots)\big)\eta_{j}(\cdots)\\
 &  & +\sum_{{i+j=k\atop \tau\in sh(i,j)}}\sum_{{\sigma\in sh(n-2i,m-2j)\atop \sigma^{-1}(a),\sigma^{-1}(a+1)>n-2i}}(-1)^{\sigma}\omega_{i}(\cdots)\big(\eta_{j}(\cdots e_{a},e_{a+1},\cdots;\cdots)+\eta_{i}(\cdots e_{a+1},e_{a},\cdots;\cdots)\big)\\
 &  & +\sum_{{i+j=k\atop \tau\in sh(i,j)}}\sum_{{\sigma\in sh(n-2i,m-2j)\atop \sigma^{-1}(a)\leq n-2i<\sigma^{-1}(a+1)}}(-1)^{\sigma}\big(\omega_{i}(\cdots e_{a}\cdots)\eta_{j}(\cdots e_{a+1}\cdots)+\omega_{i}(\cdots e_{a+1}\cdots)\eta_{j}(\cdots e_{a}\cdots)\big)\\
 &  & +\sum_{{i+j=k\atop \tau\in sh(i,j)}}\sum_{{\sigma\in sh(n-2i,m-2j)\atop \sigma^{-1}(a+1)\leq n-2i<\sigma^{-1}(a)}}(-1)^{\sigma}\big(\omega_{i}(\cdots e_{a+1}\cdots)\eta_{j}(\cdots e_{a}\cdots)+\omega_{i}(\cdots e_{a}\cdots)\eta_{j}(\cdots e_{a+1}\cdots)\big)\\
 &  & \mbox{(note\ that\ the\ same\ sequence\ \ensuremath{(\cdots e_{a},\cdots e_{a+1},\cdots)}\ viewed\ as\ permutations}\\
 &  & \mbox{of\ \ensuremath{(\cdots e_{a},e_{a+1},\cdots)}\ and\ \ensuremath{(\cdots e_{a+1},e_{a},\cdots)}\ have\ opposite\ signs)}\\
 & = & \sum_{{i+j=k\atop \tau\in sh(i,j)}}\sum_{{\sigma\in sh(n-2i,m-2j)\atop \sigma^{-1}(a),\sigma^{-1}(a+1)\leq n-2i}}(-1)^{\sigma+1}\omega_{i+1}(\cdots,\widehat{e_{a}},\widehat{e_{a+1}},\cdots;(e_{a},e_{a+1}),\cdots)\eta_{j}(\cdots)\\
 &  & +\sum_{{i+j=k\atop \tau\in sh(i,j)}}\sum_{{\sigma\in sh(n-2i,m-2j)\atop \sigma^{-1}(a),\sigma^{-1}(a+1)>n-2i}}(-1)^{\sigma+1}\omega_{i}(\cdots)\eta_{j+1}(\cdots,\widehat{e_{a}},\widehat{e_{a+1}},\cdots;(e_{a},e_{a+1}),\cdots)\\
 & = & \sum_{{l+j=k+1\atop \sigma\in sh(n-2l,m-2j)}}\sum_{{\tau\in sh(l,j)\atop \tau^{-1}((e_{a},e_{a+1}))\leq l}}(-1)^{\sigma+1}\omega_{l}(\cdots;(e_{a},e_{a+1}),\cdots)\eta_{j}(\cdots)\\
 &  & +\sum_{{i+l=k+1\atop \sigma\in sh(n-2i,m-2l)}}\sum_{{\tau\in sh(i,l)\atop \tau^{-1}((e_{a},e_{a+1}))>i}}(-1)^{\sigma+1}\omega_{i}(\cdots)\eta_{l}(\cdots;(e_{a},e_{a+1}),\cdots)\\
 & = & -(\omega\cdot\eta)_{k+1}(e_{1},\cdots,\widehat{e_{a}},\widehat{e_{a+1}},\cdots;(e_{a},e_{a+1}),\cdots)
\end{eqnarray*}

Step 2:

The multiplication is associative:

$\forall\omega\in C^{n}(L),\ \eta\in C^{m}(L),\ \lambda\in C^{l}(L)$,
by definition it is an easy calculation that, $((\omega\cdot\eta)\cdot\lambda)_{k}(e_{1},\cdots,e_{n+m+l-2k};f_{1},\cdots,f_{k})$
and $(\omega\cdot(\eta\cdot\lambda))_{k}(e_{1},\cdots,e_{n+m+l-2k};f_{1},\cdots,f_{k})$
both equal to:
\[
\sum_{{a+b+c=k\atop {\sigma\in sh(n-2a,m-2b,l-2c)\atop \tau\in sh(a,b,c)}}}(-1)^{\sigma}\omega_{a}(\cdots)\eta_{b}(\cdots)\lambda_{c}(\cdots)
\]

Step 3:

The differential $d$ is a graded derivation:
\[
d(\omega\cdot\eta)=(d\omega)\cdot\eta+(-1)^{n}\omega\cdot(d\eta),\ \forall\omega\in C^{n}(L),\ \eta\in C^{m}(L).
\]

Since $d=d_{0}+\delta$, it suffices to prove the equation for $d_{0},\delta$
respectively.

For $d_{0}$, we only give the proof for the case of degree $0$ here,
since the proof is almost the same for cases of higher degrees (the
only difference is that the sum should be taken over permutations
of the arguments in $Z$ as well).

\begin{eqnarray*}
 &  & (d_{0}(\omega\cdot\eta))_{0}(e_{1},\cdots,e_{n+m+1})\\
 & = & \sum_{a}(-1)^{a+1}\rho(e_{a})(\omega\cdot\eta)_{0}(\cdots\widehat{e_{a}}\cdots)+\sum_{a<b}(-1)^{a}(\omega\cdot\eta)_{0}(\cdots\widehat{e_{a}}\cdots\widehat{e_{b}},e_{a}\circ e_{b}\cdots)\\
 & = & \sum_{a}(-1)^{a+1}\rho(e_{a})\big(\sum_{\sigma\in sh(n,m)\{\cdots,\hat{a},\cdots\}}(-1)^{\sigma}\omega_{0}(e_{\sigma(1)}\cdots e_{\sigma(n)})\eta_{0}(e_{\sigma(n+1)}\cdots e_{\sigma(n+m)})\big)\\
 &  & +\sum_{a<b}(-1)^{a}\sum_{{\sigma\in sh(n,m)\{\cdots\hat{a},\cdots\}\atop \sigma^{-1}(b)<n+1}}(-1)^{\sigma}\omega_{0}(e_{\sigma(1)},\cdots\widehat{e_{b}},e_{a}\circ e_{b},\cdots e_{\sigma(n)})\eta_{0}(e_{\sigma(n+1)},\cdots e_{\sigma(n+m+1)})\\
 &  & +\sum_{a<b}(-1)^{a}\sum_{{\sigma\in sh(n,m)\{\cdots\hat{a},\cdots\}\atop \sigma^{-1}(b)>n}}(-1)^{\sigma}\omega_{0}(e_{\sigma(1)},\cdots e_{\sigma(n)})\eta_{0}(e_{\sigma(n+1)},\cdots\widehat{e_{b}},e_{a}\circ e_{b},\cdots e_{\sigma(n+m+1)})\\
 &  & \mbox{(let\ \ensuremath{\sigma_{1}}, \ensuremath{\sigma_{2}}\ be\ the\ permutations\ adding\ \ensuremath{a}\ to\ \ensuremath{\sigma}\ in\ front and at back respectively)}\\
 & = & \sum_{a}\sum_{\sigma_{1}\in sh(n+1,m)}(-1)^{a+1}(-1)^{\sigma_{1}+\sigma_{1}^{-1}(a)-a}\\
 &  & \quad\big(\rho(e_{a})\omega_{0}(e_{\sigma_{1}(1)}\cdots\widehat{e_{a}},e_{\sigma_{1}(\sigma_{1}^{-1}(a)+1)}\cdots e_{\sigma_{1}(n+1)})\big)\eta_{0}(e_{\sigma_{1}(n+2)}\cdots e_{\sigma_{1}(n+m+1)})\\
 &  & +\sum_{a}\sum_{\sigma_{2}\in sh(n,m+1)}(-1)^{a+1}(-1)^{\sigma_{2}+\sigma_{2}^{-1}(a)-a}\\
 &  & \quad\omega_{0}(e_{\sigma_{2}(1)}\cdots e_{\sigma_{2}(n)})\big(\rho(e_{a})\eta_{0}(e_{\sigma_{2}(n+1)}\cdots\widehat{e_{a}},e_{\sigma_{2}(\sigma_{2}^{-1}(a)+1)}\cdots e_{\sigma_{2}(n+m+1)})\big)\\
 &  & +\sum_{a<b}\sum_{{\sigma_{1}\in sh(n+1,m)\atop \sigma_{1}^{-1}(b)<n+2}}(-1)^{a}(-1)^{\sigma_{1}+\sigma_{1}^{-1}(a)-a}\\
 &  & \quad\omega_{0}(e_{\sigma_{1}(1)}\cdots\widehat{e_{a}},e_{\sigma_{1}(\sigma_{1}^{-1}(a)+1)}\cdots\widehat{e_{b}},e_{a}\circ e_{b}\cdots)\eta_{0}(e_{\sigma_{1}(n+2)}\cdots e_{\sigma_{1}(n+m+1)})\\
 &  & +\sum_{a<b}\sum_{{\sigma_{2}\in sh(n,m+1)\atop \sigma_{2}^{-1}(b)>n+1}}(-1)^{a}(-1)^{\sigma_{2}+\sigma_{2}^{-1}(a)-a}\\
 &  & \quad\omega_{0}(e_{\sigma_{2}(1)}\cdots)\eta_{0}(e_{\sigma_{2}(n+1)}\cdots\widehat{e_{a}},e_{\sigma_{2}(\sigma_{2}^{-1}(a)+1)}\cdots\widehat{e_{b}},e_{a}\circ e_{b}\cdots e_{\sigma_{2}(n+m+1)})\\
 & = & \sum_{\sigma_{1}}(-1)^{\sigma_{1}}\sum_{(a_{1}\triangleq\sigma_{1}^{-1}(a))<n+2}(-1)^{a_{1}+1}\\
 &  & \quad\big(\rho(e_{\sigma_{1}(a_{1})})\omega_{0}(e_{\sigma_{1}(1)}\cdots\widehat{e_{\sigma_{1}(a_{1})}}\cdots e_{\sigma_{1}(n+1)})\big)\eta_{0}(e_{\sigma_{1}(n+2)}\cdots e_{\sigma_{1}(n+m+1)})\\
 &  & +\sum_{\sigma_{1}}(-1)^{\sigma_{1}}\sum_{(a_{1}\triangleq\sigma_{1}^{-1}(a))<(b_{1}\triangleq\sigma_{1}^{-1}(b))<n+2}(-1)^{a_{1}}\\
 &  & \quad\omega_{0}(e_{\sigma_{1}(1)}\cdots\widehat{e_{\sigma_{1}(a_{1})}}\cdots\widehat{e_{\sigma_{1}(b_{1})}},e_{\sigma_{1}(a_{1})}\circ e_{\sigma_{1}(b_{1})}\cdots)\eta_{0}(e_{\sigma_{1}(n+2)}\cdots e_{\sigma_{1}(n+m+1)})\\
 &  & +\sum_{\sigma_{2}}(-1)^{\sigma_{2}+n}\omega_{0}(e_{\sigma_{2}(1)},\cdots,e_{\sigma_{2}(n)})\cdot\sum_{a_{2}\triangleq\sigma_{2}^{-1}(a)}\\
 &  & \quad(-1)^{a_{2}-n+1}\rho(e_{\sigma_{2}(a_{2})})\eta_{0}(e_{\sigma_{2}(n+1)},\cdots,\widehat{e_{\sigma_{2}(a_{2})}},\cdots,e_{\sigma_{2}(n+m+1)})\\
 &  & +\sum_{\sigma_{2}}(-1)^{\sigma_{2}+n}\omega_{0}(e_{\sigma_{2}(1)},\cdots,e_{\sigma_{2}(n)})\cdot\sum_{n<(a_{2}\triangleq\sigma_{2}^{-1}(a))<(b_{2}\triangleq\sigma_{2}^{-1}(b))}\\
 &  & \quad(-1)^{a_{2}-n}\eta_{0}(e_{\sigma_{2}(n+1)}\cdots\widehat{e_{\sigma_{2}(a_{2})}}\cdots\widehat{e_{\sigma_{2}(b_{2})}},e_{\sigma_{2}(a_{2})}\circ e_{\sigma_{2}(b_{2})}\cdots e_{\sigma_{2}(n+m+1)})\\
 & = & \big((d_{0}\omega)\cdot\eta+(-1)^{n}\omega\cdot(d_{0}\eta)\big)_{0}(e_{1},\cdots,e_{n+m+1})
\end{eqnarray*}

For $\delta$,
\begin{eqnarray*}
 &  & (\delta(\omega\cdot\eta))_{k}(e_{1},\cdots,e_{n+m+1-2k};f_{1},\cdots,f_{k})\\
 & = & \sum_{i}(\omega\cdot\eta)_{k-1}(f_{i},e_{1},\cdots,e_{n+m+1-2k};\cdots,\hat{f_{i}},\cdots)\\
 & = & \sum_{i}\sum_{{a+b=k-1\atop \tau\in sh(a,b)}}\sum_{{\sigma\in sh(n-2a,m-2b)\atop \sigma^{-1}(f_{i})\leq n-2a}}(-1)^{\sigma}\omega_{a}(f_{i},\cdots;\cdots\hat{f_{i}}\cdots)\eta_{b}(\cdots)\\
 &  & +\sum_{i}\sum_{{a+b=k-1\atop \tau\in sh(a,b)}}\sum_{{\sigma\in sh(n-2a,m-2b)\atop \sigma^{-1}(f_{i})>n-2a}}(-1)^{\sigma}\omega_{a}(\cdots)\eta_{b}(f_{i},\cdots;\cdots\hat{f_{i}}\cdots)\\
 &  & \mbox{(removing\ \ensuremath{f_{i}}\ from\ \ensuremath{\sigma},\ adding\ \ensuremath{f_{i}}\ to\ \ensuremath{\tau}\ in\ front\ and\ at\ back\ respectively)}\\
 & = & \sum_{{a+b=k\atop \sigma\in sh(n+1-2a,m-2b)}}\sum_{{\tau\in sh(a,b)\atop \tau^{-1}(i)\leq a}}(-1)^{\sigma}\omega_{a-1}(f_{i},e_{\sigma(1)},\cdots;\cdots,\widehat{f_{\tau(\tau^{-1}(i))}},\cdots)\eta_{b}(\cdots)\\
 &  & +\sum_{{a+b=k\atop \sigma\in sh(n+1-2a,m-2b)}}\sum_{{\tau\in sh(a,b)\atop \tau^{-1}(i)>a}}(-1)^{\sigma+n}\omega_{a}(\cdots)\eta_{b-1}(f_{i},e_{\sigma(n-2a+1)},\cdots;\cdots,\widehat{f_{\tau(\tau^{-1}(i))}},\cdots)\\
 & = & ((\delta\omega)\cdot\eta)_{k}(\cdots)+(-1)^{n}(\omega\cdot(\delta\eta))_{k}(\cdots)
\end{eqnarray*}

The proof is finished.\end{proof}
\begin{rem}
\label{Remark:standard complex}In \cite{Cai}, we construct a Courant-Dorfman
algebra structure on  $S^{\bullet}(Z)\otimes L$ for any Leibniz algebra
$L$ with left center $Z$, and prove an isomorphism between $H$-standard
complexes of them. So it is a direct conclusion that $C(L)$ is isomorphic
to the standard complex of the Courant-Dorfman algebra $S^{\bullet}(Z)\otimes L$.
\end{rem}
Next we consider a 3-cochain in $C(L)$. Let $\Theta_{0}:L\otimes L\otimes L\rightarrow S^{\bullet}(Z)$
and $\Theta_{1}:L\otimes Z\rightarrow S^{\bullet}(Z)$ be defined
as:
\[
\Theta_{0}(e_{1},e_{2},e_{3})=(e_{1}\circ e_{2},e_{3})
\]
\begin{equation}
\Theta_{1}(e;f)=-(e,f).\label{eq: Theta}
\end{equation}

We can prove that $\Theta=(\Theta_{0},\Theta_{1})$ is a 3-cocycle
by a direct calculation, but actually we have the following:
\begin{prop}
$\Theta=d\zeta$ is a 3-coboundary, where $\zeta=(\zeta_{0},\zeta_{1})\in C^{2}(L)$
is defined by:
\[
\zeta_{0}(e_{1},e_{2})\triangleq(e_{1},e_{2}),\quad\zeta_{1}(f)\triangleq-2f.
\]
\end{prop}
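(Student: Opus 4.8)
The plan is to verify the identity $\Theta = d\zeta$ componentwise, using $d = d_0 + \delta$. Since $\Theta = (\Theta_0,\Theta_1)$ and $\zeta=(\zeta_0,\zeta_1)\in C^2(L)$, it suffices to check $(d\zeta)_0 = \Theta_0$ and $(d\zeta)_1=\Theta_1$ as maps $\otimes^3 L\to S^{\bullet}(Z)$ and $L\otimes Z\to S^{\bullet}(Z)$ respectively. Throughout I will use three structural facts about the symmetric product $(x,y) = x\circ y + y\circ x$: first, it takes values in the left center $Z$, so that $(x,y)\circ e = 0$ for all $e\in L$; second, for $f\in Z$ one has $(e,f) = e\circ f = \rho(e)f$, since $f\circ e = 0$; and third, $\rho$ acts on the product by derivations, $\rho(e)(x,y) = (e\circ x, y) + (x, e\circ y)$, which is itself a one-line consequence of the left Leibniz identity. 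These identities, together with the Leibniz identity, are the only inputs.

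For the top component I would first observe that $(\delta\zeta)_0$ vanishes, because $\delta$ feeds into $\zeta_{-1}$, which is absent; hence $(d\zeta)_0 = (d_0\zeta)_0$. Writing out $(d_0\zeta)_0(e_1,e_2,e_3)$ gives the three action terms $(-1)^{a+1}\rho(e_a)\zeta_0(\cdots\widehat{e_a}\cdots)$ and the three bracket terms $(-1)^a\zeta_0(\cdots\widehat{e_a}\cdots,e_a\circ e_b,\cdots)$, all evaluated on $\zeta_0 = (\cdot,\cdot)$. Substituting the derivation property into the three $\rho$-terms converts them into pairings, after which the explicit bracket terms cancel several of these, leaving a short expression in iterated brackets. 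Expanding every remaining symmetric product as $x\circ y + y\circ x$, the terms in which a symmetric product appears on the left die by the left-center property, and the surviving iterated brackets collapse, after a single use of the left Leibniz identity, precisely to $(e_1\circ e_2,e_3) = \Theta_0$. This degree-$0$ cancellation is the main obstacle: it is where the left-center hypothesis and the Leibniz identity must be combined, and keeping track of the signs of the six terms correctly is the only delicate point.

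The degree-$1$ component is then immediate. With a single $L$-entry the bracket part of $d_0$ is empty, so $(d_0\zeta)_1(e;f) = \rho(e)\zeta_1(f) = -2\,e\circ f$, while $\delta$ moves the central entry $f$ into the first $L$-slot, giving $(\delta\zeta)_1(e;f) = \zeta_0(f,e) = (e,f) = e\circ f$. Adding the two yields $(d\zeta)_1(e;f) = -2\,e\circ f + e\circ f = -(e\circ f) = -(e,f) = \Theta_1(e;f)$, where the factor $2$ in $\zeta_1$ is exactly what is needed for the two contributions to combine correctly. Having matched both components, we conclude $d\zeta = \Theta$, which in particular reproves that $\Theta$ is a cocycle.
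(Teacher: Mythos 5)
Your proposal is correct and follows essentially the same route as the paper: both proofs verify $\Theta=d\zeta$ componentwise, observe that $(\delta\zeta)_0$ vanishes, reduce the degree-$0$ identity to the invariance property $\rho(e)(x,y)=(e\circ x,y)+(x,e\circ y)$ combined with the left-center property $(x,y)\circ e=0$, and treat the degree-$1$ component as an immediate two-term sum. The paper's degree-$0$ computation is marginally slicker than yours: after the first cancellation it recognizes the leftover pair $(e_3\circ e_1,e_2)+(e_1,e_3\circ e_2)$ as $\rho(e_3)(e_1,e_2)$ by a second application of invariance, so it never expands into iterated brackets or invokes the Leibniz identity directly, whereas you expand the symmetric products and reconstruct the collapse by hand; this is a cosmetic difference, and your version does close. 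The one substantive item you skip is the paper's first step: checking that $\zeta$ is actually an element of $C^{2}(L)$, i.e. the weak skew-symmetry compatibility
\[
\zeta_{0}(e_{1},e_{2})+\zeta_{0}(e_{2},e_{1})=2(e_{1},e_{2})=-\zeta_{1}\big((e_{1},e_{2})\big).
\]
Without this, $\zeta$ is not a cochain of the complex and the assertion that $\Theta$ is a coboundary is not yet meaningful; note also that it is this condition, not only the degree-$1$ bookkeeping you point to, that forces the coefficient $-2$ in $\zeta_{1}$. The check is one line, so the omission is easily repaired, but a complete proof should include it.
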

\begin{proof}
Since 
\[
\zeta_{0}(e_{1},e_{2})+\zeta_{0}(e_{2},e_{1})=2(e_{1},e_{2})=-\zeta_{1}((e_{1},e_{2})),
\]
$\zeta=(\zeta_{0},\zeta_{1})$ is a 2-cochain in $C^{2}(L)$. By definition,
\begin{eqnarray*}
 &  & (d\zeta)_{0}(e_{1},e_{2},e_{3})\\
 & = & \rho(e_{1})\zeta_{0}(e_{2},e_{3})-\rho(e_{2})\zeta_{0}(e_{1},e_{3})+\rho(e_{3})\zeta_{0}(e_{1},e_{2})\\
 &  & -\zeta_{0}(e_{1}\circ e_{2},e_{3})-\zeta_{0}(e_{2},e_{1}\circ e_{3})+\zeta_{0}(e_{1},e_{2}\circ e_{3})\\
 & = & \rho(e_{1})(e_{2},e_{3})-(e_{1}\circ e_{2},e_{3})-(e_{2},e_{1}\circ e_{3})\\
 &  & -\rho(e_{2})(e_{1},e_{3})+(e_{1},e_{2}\circ e_{3})+\rho(e_{3})(e_{1},e_{2})\\
 & = & -(e_{2}\circ e_{1},e_{3})+\rho(e_{3})\zeta_{0}(e_{1},e_{2})\\
 & = & (e_{1}\circ e_{2},e_{3})
\end{eqnarray*}
\[
(d\zeta)_{1}(e;f)=\rho(e)\zeta_{1}(f)+\zeta_{0}(f,e)=-2(e,f)+(e,f)=-(e,f)
\]
so $\Theta=d\zeta$ is a 3-coboundary.
\end{proof}
Actually $\Theta$ is exactly the restriction of the canonical 3-cocycle
of the Courant-Dorfman algebra $S^{\bullet}(Z)\otimes L$ (see Remark
\ref{Remark:standard complex}). We will call $\Theta$ the canonical
3-cocycle of $L$.

\section{Poisson structure on a subcomplex}

In this section, we consider a subcomplex, denoted by $\tilde{C}(L)$,
consisting of the so-called ``representable cochains'', and construct
a Poisson algebra structure on $\tilde{C}(L)$.

Let $L^{\vee}\triangleq Hom(L,\ S^{\bullet}(Z))$. In this paper,
$Hom$ always means $\mathfrak{k}$-linear homomorphisms.

$\forall\omega\in C^{n}(L)$, $\omega_{k}$ gives rise to a map $\bar{\omega}_{k}:\ L^{\otimes n-2k-1}\otimes S^{k}(Z)\rightarrow L^{\vee}:$
\[
\bar{\omega}_{k}(e_{1},\cdots,e_{n-2k-1};f_{1},\cdots f_{k})(e)\triangleq(\iota_{f_{k}}\cdots\iota_{f_{1}}\iota_{e_{n-2k-1}}\cdots\iota_{e_{1}}\omega_{k})(e)=\omega_{k}(e_{1},\cdots,e_{n-2k-1},e;f_{1},\cdots,f_{k}).
\]

The symmetric product $(\cdot,\cdot)$ of $L$ can be $S^{\bullet}(Z)$-linearly
extended to a symmetric product on $S^{\bullet}(Z)\otimes L$, thus
inducing a map 
\[
\phi\triangleq(\cdot,\ ):\ S^{\bullet}(Z)\otimes L\rightarrow L^{\vee}.
\]

\begin{defn}
\label{Def:good cochains}Given any $\omega\in C^{n}(L)$, if $Im(\bar{\omega}_{k})\subseteq Im(\phi),\ \forall k$,
we call $\omega$ a ``representable cochain''. The graded subspace
of $C(L)$ consisting of all representable cochains is denoted by
$\tilde{C}(L)$.
\end{defn}
By definition, $e^{\flat}\triangleq(e,\ ):\ L\rightarrow Z$ is obviously
a representable cochain.

Given $\omega\in\tilde{C}^{n}(L)$, $\omega_{k}$ induces a $\mathfrak{k}$-linear
map 
\[
\tilde{\omega}_{k}:\ L^{\otimes n-2k-1}\rightarrow Hom(S^{k}(Z),\ S^{\bullet}(Z)\otimes L),
\]
which is defined by
\[
\tilde{\omega}_{k}(e_{1},\cdots,e_{n-2k-1})(f_{1},\cdots,f_{k})\triangleq\phi^{-1}(\bar{\omega}_{k}(e_{1},\cdots,e_{n-2k-1};f_{1},\cdots,f_{k})).
\]
 Note that, to determine $\tilde{\omega}_{k}$, we only need to choose
the preimage of $\bar{\omega}_{k}$ for given basis of $L$ and $Z$,
and then take the $\mathfrak{k}$-linear extension. So $\tilde{\omega}_{k}$
depends on the choices, it is not uniquely determined unless $\phi$
is injective (i.e. the bilinear product of $L$ is non-degenerate).
\begin{prop}
$\tilde{C}(L)$ is a subcomplex of $C(L)$.\end{prop}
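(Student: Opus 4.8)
The plan is to show that $d=d_{0}+\delta$ preserves $\tilde{C}(L)$. Because the assignment $\omega_{k}\mapsto\bar{\omega}_{k}$ (freezing the last $L$-argument) is linear and $\img(\phi)$ is a linear subspace of $L^{\vee}$, it suffices to show that, for each $k$, the two maps $\overline{(d_{0}\omega)}_{k}$ and $\overline{(\delta\omega)}_{k}$ obtained by freezing the last $L$-slot of the multilinear maps $(d_{0}\omega)_{k}$ and $(\delta\omega)_{k}$ both have image in $\img(\phi)$; their sum is $\overline{(d\omega)}_{k}$, so $d\omega\in\tilde{C}(L)$. Throughout I take the $(n+1-2k)$-th slot to be the variable $e$ on which the element of $L^{\vee}$ is evaluated, and I write $T_{\hat{a}}=\tilde{\omega}_{k}(e_{1},\dots,\widehat{e_{a}},\dots,e_{n-2k})(f_{1},\dots,f_{k})$ for a chosen $\phi$-preimage of the relevant component of $\omega$.

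Two elementary identities do all the work. First, the symmetric product is $\rho$-invariant, $\rho(e)(x,y)=(e\circ x,y)+(x,e\circ y)$, which is immediate from the left Leibniz identity; its $S^{\bullet}(Z)$-linear extension reads $\rho(e)\big(\phi(T)(e')\big)=\phi(D_{e}T)(e')+\phi(T)(e\circ e')$, where $D_{e}$ denotes the natural action of $L$ on $S^{\bullet}(Z)\otimes L$ (by $\rho$ on the $S^{\bullet}(Z)$-factor and by $e\circ-$ on the $L$-factor). Second, the action of $L$ on $S^{\bullet}(Z)$ is itself representable: for $f\in Z$ one has $\rho(e)f=e\circ f=(f,e)$ since $f\circ e=0$ by the left-center property, so extending the derivation $\rho(e)$ by the Leibniz rule gives $\rho(e)P=\phi(\partial P)(e)$ for all $P\in S^{\bullet}(Z)$, where $\partial P\in S^{\bullet}(Z)\otimes Z\subseteq S^{\bullet}(Z)\otimes L$ is the associated symmetric de~Rham differential.

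For $\delta$ the statement is immediate, because $\delta$ only prepends the reinserted argument $f_{j}\in Z\subseteq L$ to the $L$-slots and never touches the last slot; freezing $e$ gives $\overline{(\delta\omega)}_{k}(\cdots)(e)=\sum_{j}\phi\big(\tilde{\omega}_{k-1}(f_{j},e_{1},\dots,e_{n-2k})(\dots,\widehat{f_{j}},\dots)\big)(e)\in\img(\phi)$. For $d_{0}$ I would split its defining sum according to whether the differential touches the frozen slot. The $\rho$-terms $(-1)^{a+1}\rho(e_{a})\omega_{k}(\dots\widehat{e_{a}}\dots,e;f)$ with $a\le n-2k$ keep $e$ last, and pulling $\rho(e_{a})$ through $\phi$ by the first identity produces a clean term $\phi(D_{e_{a}}T_{\hat{a}})(e)$ plus an error $\phi(T_{\hat{a}})(e_{a}\circ e)$; the bracket terms with both indices $\le n-2k$ likewise keep $e$ last and are already of the form $\phi(\cdot)(e)$. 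The remaining summands are the bracket terms whose second index is the frozen one, each equal to $(-1)^{a}\phi(T_{\hat{a}})(e_{a}\circ e)$, and the single external term $(-1)^{n}\rho(e)\,w$ with $w=\omega_{k}(e_{1},\dots,e_{n-2k};f)\in S^{\bullet}(Z)$. The errors cancel the former by the sign identity $(-1)^{a+1}+(-1)^{a}=0$, and the latter is representable by the second identity, $(-1)^{n}\rho(e)w=\phi\big((-1)^{n}\partial w\big)(e)$. Collecting the survivors exhibits $\overline{(d_{0}\omega)}_{k}(\cdots)$ as $\phi$ of an explicit element of $S^{\bullet}(Z)\otimes L$.

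The conceptual content is entirely contained in the two identities; the main obstacle is the bookkeeping. For general $k$ the symmetric arguments $f_{1},\dots,f_{k}$ must be carried along as spectators and the permutation signs tracked exactly as in the proof of graded commutativity, and one must verify that the decomposition into ``clean term plus error'', the error/bracket cancellation, and the single external term account for every summand with the correct sign — in particular that no bracket term has the frozen index in its first slot, which cannot happen since that slot is the largest index. Once the sign accounting is pinned down, representability of $\overline{(d_{0}\omega)}_{k}$, and hence of $\overline{(d\omega)}_{k}$, follows, proving that $\tilde{C}(L)$ is a subcomplex.
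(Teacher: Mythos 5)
Your proof is correct and follows essentially the same route as the paper: freeze the last $L$-slot, split $d=d_{0}+\delta$ into the summands that touch that slot and those that do not, and use representability of $\omega$ together with invariance of the symmetric product to exhibit $(d\omega)_{k}$ as $(\bullet,e_{n+1-2k})$. You in fact make explicit two points the paper's computation leaves implicit in its final step, namely the cancellation of the error terms $\phi(T_{\hat{a}})(e_{a}\circ e)$ against the bracket terms hitting the frozen slot, and the representability $\rho(e)P=\phi(\partial P)(e)$ of the action on $S^{\bullet}(Z)$ needed for the single external term.
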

\begin{proof}
$\forall\omega\in\tilde{C}^{n}(L),$ we need to prove that $d\omega\in\tilde{C}^{n+1}(L)$:
\begin{eqnarray*}
 &  & (d\omega)_{k}(e_{1},\cdots e_{n+1-2k};f_{1},\cdots f_{k})\\
 & = & \sum_{a}(-1)^{a+1}\rho(e_{a})\omega_{k}(\cdots\widehat{e_{a}},\cdots;\cdots)+\sum_{a<b}(-1)^{a}\omega_{k}(\cdots\widehat{e_{a}},\cdots e_{a}\circ e_{b},\cdots;\cdots)\\
 &  & +\sum_{i}\omega_{k-1}(f_{i},e_{1},\cdots;\cdots\widehat{f_{i}},\cdots)\\
 & = & \sum_{a\leq n-2k}(-1)^{a+1}\rho(e_{a})(\tilde{\omega}_{k}(e_{1},\cdots\widehat{e_{a}},\cdots e_{n-2k})(f_{1},\cdots f_{k}),e_{n+1-2k})\\
 &  & +(-1)^{n}\rho(e_{n+1-2k})\omega_{k}(e_{1},\cdots e_{n-2k};\cdots)\\
 &  & +\sum_{a<b\leq n-2k}(-1)^{a}(\tilde{\omega}_{k}(\cdots\widehat{e_{a}},\cdots e_{a}\circ e_{b},\cdots e_{n-2k})(f_{1},\cdots f_{k}),e_{n+1-2k})\\
 &  & +\sum_{a\leq n-2k}(-1)^{a}\omega_{k}(\cdots\widehat{e_{a}},\cdots e_{n-2k},e_{a}\circ e_{n+1-2k};\cdots)\\
 &  & +\sum_{i}(\tilde{\omega}_{k-1}(f_{i},e_{1},\cdots e_{n-2k})(\cdots\widehat{f_{i}},\cdots),e_{n+1-2k})\\
 & = & (\bullet,e_{n+1-2k}).
\end{eqnarray*}
The proof is finished.
\end{proof}
Next, we will define a graded bracket on $\tilde{C}(L)$.

$\forall\alpha\in Hom(S^{k}(Z),\ S^{\bullet}(Z)\otimes L),\ \beta\in Hom(S^{l}(Z),\ S^{\bullet}(Z)\otimes L)$,
define $\langle\alpha\cdot\beta\rangle\in Hom(S^{k+l}(Z),\ S^{\bullet}(Z))$
as
\[
\langle\alpha\cdot\beta\rangle(f_{1},\cdots,f_{k+l})\triangleq\sum_{\sigma\in sh(k,l)}(\alpha(f_{\sigma(1)},\cdots,f_{\sigma(k)}),\ \beta(f_{\sigma(k+1)},\cdots,f_{\sigma(k+l)})).
\]
 $\forall\gamma\in Hom(S^{k}(Z),\ S^{\bullet}(Z)),\ \delta\in Hom(S^{l}(Z),\ S^{\bullet}(Z))$,
define $\gamma\circ\delta\in Hom(S^{k+l-1}(Z),\ S^{\bullet}(Z))$
as
\[
\gamma\circ\delta(f_{1},\cdots,f_{k+l-1})\triangleq\sum_{\sigma\in sh(l,k-1)}\check{\gamma}(\delta(f_{\sigma(1)},\cdots,f_{\sigma(l)}),f_{\sigma(l+1)},\cdots,f_{\sigma(l+k-1)}),
\]

where $\check{\gamma}:\ S^{\bullet}(Z)\otimes S^{k-1}(Z)\rightarrow S^{\bullet}(Z)$
is extended from $\gamma$ by Leibniz rule in the first argument. 

Now given $\omega\in\tilde{C}^{n}(L),\ \eta\in\tilde{C}^{m}(L)$,
we define the bracket $\{\omega,\eta\}$ as follows:
\begin{equation}
\{\omega,\eta\}\triangleq\omega\bullet\eta+\omega\diamond\eta-(-1)^{nm}\eta\diamond\omega,\label{eq:Poisson bracket Leibniz}
\end{equation}

where $\omega\bullet\eta=((\omega\bullet\eta)_{0},(\omega\bullet\eta)_{1},\cdots)$,
with $(\omega\bullet\eta)_{k}:\ \otimes^{n+m-2-2k}L\rightarrow Hom(S^{k}(Z),\ S^{\bullet}(Z))$
defined by
\begin{eqnarray*}
 &  & (\omega\bullet\eta)_{k}(e_{1},\cdots,e_{n+m-2-2k})\\
 & \triangleq & (-1)^{m-1}\sum_{{i+j=k\atop \sigma\in sh(n-2i-1,m-2j-1)}}(-1)^{\sigma}\langle\tilde{\omega}_{i}(e_{\sigma(1)},\cdots e_{\sigma(n-2i-1)})\cdot\tilde{\eta}_{j}(e_{\sigma(n-2i)},\cdots e_{\sigma(n+m-2-2k)})\rangle,
\end{eqnarray*}

(obviously the value does not depend on the choices of $\tilde{\omega}_{i}$
and $\tilde{\eta}_{j}$, so it is well-defined) 

and $\omega\diamond\eta=((\omega\diamond\eta)_{0},(\omega\diamond\eta)_{1},\cdots)$,
with $(\omega\diamond\eta)_{k}:\ \otimes^{n+m-2-2k}L\rightarrow Hom(S^{k}(Z),\ S^{\bullet}(Z))$
defined by
\begin{eqnarray*}
 &  & (\omega\diamond\eta)_{k}(e_{1},\cdots,e_{n+m-2-2k})\\
 & \triangleq & \sum_{{i+j=k\atop \sigma\in sh(n-2i-2,m-2j)}}(-1)^{\sigma}\omega_{i+1}(e_{\sigma(1)}\cdots e_{\sigma(n-2i-2)})\circ\eta_{j}(e_{\sigma(n-2i-1)}\cdots e_{\sigma(n+m-2-2k)}).
\end{eqnarray*}
The following is the main theorem of this section:
\begin{thm}
$(\tilde{C}(L),\{\cdot,\cdot\})$ is a graded Poisson algebra. 

\label{thm:poisson bracket Leibniz}
\end{thm}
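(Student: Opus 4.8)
The plan is to verify the three defining properties of a graded Poisson algebra in turn: that $\tilde{C}(L)$ is a graded commutative associative subalgebra under $\cdot$, that $\{\cdot,\cdot\}$ is a graded Lie bracket of degree $-2$ (graded antisymmetry and graded Jacobi), and that $\{\omega,\cdot\}$ is a graded derivation of the product. The commutativity and associativity of $\cdot$ are already available from the earlier Proposition, so the algebra point reduces to two closure statements: that $\omega\cdot\eta$ and $\{\omega,\eta\}$ are again representable cochains. Closure under $\cdot$ is the easy one --- since $\phi=(\cdot,\ )$ is $S^\bullet(Z)$-linear, a product of values of $\bar\omega$ and $\bar\eta$ lying in $Im(\phi)$ again lies in $Im(\phi)$, so I would dispatch it by distributing the pairing slot of $\overline{\omega\cdot\eta}_k$ over the two shuffle factors. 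The more delicate closure is $\{\omega,\eta\}\in\tilde C^{n+m-2}(L)$: one must check both the weak skew-symmetry that makes it a genuine cochain and the representability $Im(\overline{\{\omega,\eta\}}_k)\subseteq Im(\phi)$. I expect representability to follow from the fact that $\bullet$ is assembled entirely out of the pairing $(\cdot,\ )$ of lifted $(S^\bullet(Z)\otimes L)$-valued data, while $\diamond$ feeds its output through $\check\gamma$, a derivation of $S^\bullet(Z)$; the potentially non-representable contributions coming from the last argument should cancel between $\omega\diamond\eta$ and $\eta\diamond\omega$. I would record this as a preliminary lemma, since it simultaneously confirms that the bracket is independent of the chosen lifts $\tilde\omega,\tilde\eta$.

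Next I would establish graded antisymmetry $\{\omega,\eta\}=-(-1)^{nm}\{\eta,\omega\}$. The key computation is that $\bullet$ is already graded antisymmetric: swapping $\omega$ and $\eta$ replaces $\langle\tilde\omega_i\cdot\tilde\eta_j\rangle$ by $\langle\tilde\eta_j\cdot\tilde\omega_i\rangle$, which equals it because $(\cdot,\ )$ is symmetric, and transposing the two shuffle blocks contributes the sign $(-1)^{(n-2i-1)(m-2j-1)}=(-1)^{(n-1)(m-1)}$; combined with the change of prefactor from $(-1)^{m-1}$ to $(-1)^{n-1}$ this yields exactly $\omega\bullet\eta=-(-1)^{nm}\,\eta\bullet\omega$ (the total exponent being $nm+1$). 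The two remaining terms are arranged precisely so that $\omega\diamond\eta$ and $-(-1)^{nm}\eta\diamond\omega$ are interchanged under $\omega\leftrightarrow\eta$, so the whole bracket is graded antisymmetric. This step is pure sign bookkeeping once the symmetry of $(\cdot,\ )$ is in hand.

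The graded Leibniz rule $\{\omega,\eta\cdot\lambda\}=\{\omega,\eta\}\cdot\lambda+(-1)^{nm}\eta\cdot\{\omega,\lambda\}$ I would prove by splitting the bracket into its $\bullet$ and $\diamond$ parts and checking the derivation identity for each against the shuffle product $\eqref{eq:multiplication}$. Concretely, the pairing slot contracted by $\bullet$, respectively the insertion performed by $\diamond$, lands in either the $\eta$-factor or the $\lambda$-factor of the product, and reorganizing the shuffles exactly as in Steps 1--3 of the earlier Proposition reproduces the two terms on the right with the stated Koszul sign. This is routine but lengthy shuffle combinatorics, with no new structural input beyond the $S^\bullet(Z)$-linearity of $\phi$ and the derivation property of $\check\gamma$.

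The main obstacle is the graded Jacobi identity. Expanding $\{\{\omega,\eta\},\lambda\}$ and summing over the graded-cyclic permutations produces terms of mixed type ($\bullet$ against $\bullet$, $\bullet$ against $\diamond$, $\diamond$ against $\diamond$); the pure $\bullet\bullet$ and $\diamond\diamond$ contributions should cancel within the cyclic sum by the symmetry of $(\cdot,\ )$ and an associativity-type identity for $\circ$, while the surviving cross-terms must collapse using the Leibniz identity of $L$ together with the defining property of the left center $Z$ (which guarantees that $\rho$ is an honest left action and that $(\cdot,\ )$ is suitably invariant). Rather than grind this out by brute force, I would instead invoke Remark \ref{Remark:standard complex}: $C(L)$ is isomorphic to the standard complex of the Courant-Dorfman algebra $S^\bullet(Z)\otimes L$, on which $\{\cdot,\cdot\}$ is precisely the canonical ``big bracket'' Poisson structure attached to the pairing, and on which the canonical $3$-cocycle is $\Theta$. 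The Jacobi identity then becomes the known graded-Poisson property of that big bracket, and the genuine content reduces to identifying the two brackets under the isomorphism and matching sign conventions. I therefore expect the only real labor to lie in the careful bookkeeping of shuffle signs in this identification, with the symmetry and invariance of $(\cdot,\ )$ supplying all of the structural input.
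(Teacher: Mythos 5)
Your treatment of the first three points tracks the paper's proof closely: the paper isolates exactly your two closure statements as preliminary lemmas (closure of $\tilde{C}(L)$ under the product \eqref{eq:multiplication}, and the fact that $\omega\bullet\eta$, $\omega\diamond\eta$, hence $\{\omega,\eta\}$, are cochains of degree $n+m-2$), proves graded antisymmetry by the same reduction to $\omega\bullet\eta=-(-1)^{nm}\eta\bullet\omega$ with the sign count $(-1)^{n-m+(n-1)(m-1)}=(-1)^{nm+1}$, and proves the derivation property by splitting $\{\omega,\cdot\}$ into its $\bullet$ and $\diamond$ parts against the shuffle product, just as you propose. Your expectation that the apparently non-representable contributions cancel between the $\diamond$ terms is also exactly what the paper's computation for representability of $\{\omega,\eta\}$ exhibits.

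The genuine gap is your handling of the graded Jacobi identity. You propose to route it through Remark \ref{Remark:standard complex} and invoke ``the known graded-Poisson property of the big bracket'' on the standard complex of the Courant-Dorfman algebra $S^{\bullet}(Z)\otimes L$. That Poisson structure (Roytenberg's) is only available when the symmetric product is \emph{strongly} non-degenerate, i.e.\ when $\phi$ is an isomorphism; only then is every cochain representable and the bracket defined on the whole complex, since the lifts $\tilde{\omega}_{k}=\phi^{-1}(\bar{\omega}_{k})$ are needed to even write it down. As the paper stresses (end of Section 3, and the remark following Theorem \ref{thm:derived bracket Leibniz}), the pairing of $S^{\bullet}(Z)\otimes L$ is only weakly non-degenerate, not strongly non-degenerate; this is precisely why the subcomplex $\tilde{C}(L)$ of representable cochains is introduced, and why the theorem is a generalization of Roytenberg's result rather than a corollary of it. Transporting the problem through the isomorphism of Remark \ref{Remark:standard complex} therefore lands you on the subcomplex of representable cochains on the Courant-Dorfman side, where the existence of a graded Poisson bracket is exactly the statement being proved --- the reduction is circular. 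The Jacobi identity has to be verified directly on $\tilde{C}(L)$, using the symmetry and invariance of $(\cdot,\cdot)$ and the Leibniz identity of $L$; this is the cyclic-cancellation computation you only sketch as a fallback, and it, together with the representability of $\{\omega,\eta\}$ (which the paper computes explicitly in its part (3)), is the real content that your proposal leaves unestablished.
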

Before the proof of this theorem, we prove the following two lemmas
first.
\begin{lem}
$\tilde{C}(L)$ is a subalgebra of $C(L)$ with the multiplication
map defined in \ref{eq:multiplication}.\end{lem}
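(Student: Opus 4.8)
The plan is to recast representability as a statement about the dependence of a cochain on its \emph{last} $L$-argument and then to check that the product formula \eqref{eq:multiplication} respects this dependence. Recall that $\omega\in C^n(L)$ lies in $\tilde C(L)$ precisely when, for every $k$ (with $n-2k\geq 1$) and every fixed $e_1,\dots,e_{n-2k-1}\in L$ and $f_1,\dots,f_k\in Z$, the functional $e\mapsto\omega_k(e_1,\dots,e_{n-2k-1},e;f_1,\dots,f_k)$ lies in $Im(\phi)$, i.e.\ equals $(\xi,\ )$ for some $\xi\in S^{\bullet}(Z)\otimes L$; by construction that $\xi$ is $\tilde\omega_k(e_1,\dots,e_{n-2k-1})(f_1,\dots,f_k)$. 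Hence, to prove $\omega\cdot\eta\in\tilde C^{n+m}(L)$ for $\omega\in\tilde C^n(L)$ and $\eta\in\tilde C^m(L)$, I must produce, for each $k$ and each fixed choice of the first $n+m-2k-1$ arguments in $L$ and $k$ arguments in $Z$, an element $\Xi\in S^{\bullet}(Z)\otimes L$ with $(\omega\cdot\eta)_k(\dots,e;\dots)=(\Xi,e)$.

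First I would isolate the last argument $e\triangleq e_{n+m-2k}$ in the defining sum \eqref{eq:multiplication} and partition the shuffle sum over $\sigma\in sh(n-2i,m-2j)$ according to whether $e$ is shuffled into the $\omega$-block (positions $1,\dots,n-2i$) or the $\eta$-block. The key combinatorial observation is that, since $e$ carries the largest index and a shuffle permutation preserves the relative order within each block, $e$ must occupy the \emph{end} of whichever block it lands in: either $e_{\sigma(n-2i)}=e$ or $e_{\sigma(n+m-2k)}=e$. Thus in every term of the sum the distinguished argument $e$ sits in the last $L$-slot of exactly one of the two factors $\omega_i$ or $\eta_j$, and that factor necessarily has at least one $L$-argument, so its representability is in force.

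For the terms in which $e$ falls in the $\omega$-block, representability of $\omega$ gives $\omega_i(\cdots,e;\cdots)=(\tilde\omega_i(\cdots)(\cdots),\,e)$, while the accompanying $\eta_j(\cdots)$ is a scalar $g\in S^{\bullet}(Z)$ not involving $e$; using that $\phi$ is $S^{\bullet}(Z)$-linear I rewrite the term as $g\cdot(\tilde\omega_i(\cdots)(\cdots),e)=(g\,\tilde\omega_i(\cdots)(\cdots),\,e)$, which is $\phi$ evaluated at an element of $S^{\bullet}(Z)\otimes L$ and then at $e$. The terms with $e$ in the $\eta$-block are treated symmetrically via representability of $\eta$. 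Summing all contributions over $i+j=k$ and over the shuffles $\sigma$ and $\mu$, and collecting the scalar factors together with the representing elements into a single $\Xi\in S^{\bullet}(Z)\otimes L$, yields $(\omega\cdot\eta)_k(\dots,e;\dots)=(\Xi,e)$ as desired; closure of $\tilde C(L)$ under \eqref{eq:multiplication} follows, and associativity and graded commutativity are inherited from the ambient differential graded commutative algebra $C(L)$. The routine but slightly delicate part will be the sign and index bookkeeping when splitting $sh(n-2i,m-2j)$ into the two sub-families and matching each surviving term to the correct $\tilde\omega_i$ or $\tilde\eta_j$; the entire conceptual content, however, rests on the two facts that the maximal-index argument always lands at the tail of its block and that $\phi$ is $S^{\bullet}(Z)$-linear, so a scalar coming from the opposite factor can always be absorbed into the representing element.
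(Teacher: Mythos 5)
Your proposal is correct and follows essentially the same route as the paper's proof: the paper likewise splits the shuffle sum in \eqref{eq:multiplication} according to whether the last $L$-argument $e_{m+l-2k}$ lands at the end of the first or the second block (the only possibilities, since shuffles preserve order within blocks), applies representability of the corresponding factor to write that factor as $(\tilde{\eta}_i(\cdots)(\cdots),e_{m+l-2k})$ or $(\tilde{\lambda}_j(\cdots)(\cdots),e_{m+l-2k})$, and absorbs the scalar from the opposite factor into the pairing by $S^{\bullet}(Z)$-linearity, collecting everything as $(\Xi,e_{m+l-2k})$. Your reduction of representability to a statement about the last argument and your handling of the sign bookkeeping match the paper's computation, so no gap remains.
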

\begin{proof}
Given $\eta\in\tilde{C}^{m}(L),\lambda\in\tilde{C}^{l}(L)$, we need
to prove that $\eta\lambda\in\tilde{C}^{m+l}(L)$:
\begin{eqnarray*}
 &  & (\eta\lambda)_{k}(e_{1},\cdots,e_{m+l-2k};f_{1},\cdots,f_{k})\\
 & = & \sum_{{i+j=k\atop {\sigma\in sh(m-2i,l-2j)\atop \tau\in sh(i,j)}}}(-1)^{\sigma}\eta_{i}(e_{\sigma(1)}\cdots;f_{\tau(1)}\cdots f_{\tau(i)})\lambda_{j}(e_{\sigma(m-2i+1)}\cdots e_{\sigma(m+l-2k)};f_{\tau(i+1)}\cdots f_{\tau(k)})\\
 & = & \sum_{{i+j=k\atop \tau\in sh(i,j)}}\sum_{{\sigma\in sh(m-2i,l-2j)\atop \sigma^{-1}(m+l-2k)=m-2i}}(-1)^{\sigma}(\tilde{\eta_{i}}(\cdots,\widehat{e_{m+l-2k}};\cdots),e_{m+l-2k})\lambda_{j}(\cdots;\cdots)\\
 &  & +\sum_{{i+j=k\atop \tau\in sh(i,j)}}\sum_{{\sigma\in sh(m-2i,l-2j)\atop \sigma^{-1}(m+l-2k)=m+l-2k}}(-1)^{\sigma}\eta_{i}(\cdots;\cdots)(\tilde{\lambda_{j}}(\cdots,\widehat{e_{m+l-2k}};\cdots),e_{m+l-2k})\\
 & = & (\{\sum_{{i+j=k\atop \tau\in sh(i,j)}}\sum_{\bar{\sigma}\in sh(m-2i-1,l-2j)}(-1)^{\bar{\sigma}+l}\tilde{\eta_{i}}(e_{\bar{\sigma}(1)},\cdots;f_{\tau(1)},\cdots)\lambda_{j}(e_{\bar{\sigma}(m-2i)}\cdots;f_{\tau(i+1)},\cdots)\\
 &  & +\sum_{{i+j=k\atop \tau\in sh(i,j)}}\sum_{\bar{\sigma}\in sh(m-2i,l-2j-1)}(-1)^{\bar{\sigma}}\eta_{i}(e_{\bar{\sigma}(1)}\cdots;f_{\tau(1)},\cdots)\tilde{\lambda_{j}}(e_{\bar{\sigma}(m-2i+1)}\cdots;f_{\tau(i+1)},\cdots)\},e_{m+l-2k})
\end{eqnarray*}

The lemma is proved.\end{proof}
\begin{lem}
$\omega\bullet\eta,\ \omega\diamond\eta,\ \{\omega,\eta\}$ are all
cochains in $C^{n+m-2}(L)$.\end{lem}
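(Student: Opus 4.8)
The plan is to verify in turn that $\omega\bullet\eta$ and $\omega\diamond\eta$ each belong to $C^{n+m-2}(L)$; the assertion for $\{\omega,\eta\}$ then follows immediately, since by \eqref{eq:Poisson bracket Leibniz} it is a $\mathfrak{k}$-linear combination of $\omega\bullet\eta$, $\omega\diamond\eta$ and $\eta\diamond\omega$, and $C^{n+m-2}(L)$ is a $\mathfrak{k}$-module. The degree bookkeeping is immediate in both cases: the $k$-th component is a map $\otimes^{\,n+m-2-2k}L\to \Hom(S^k(Z),S^\bullet(Z))$, equivalently a map $(\otimes^{\,n+m-2-2k}L)\otimes(\odot^k Z)\to S^\bullet(Z)$, so the genuine content is the weak skew-symmetry relating the $k$-th and $(k+1)$-st components. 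I would establish this by the same bookkeeping used in Step~1 of the proof that $C(L)$ is closed under the multiplication \eqref{eq:multiplication} (well-definedness of $\omega\bullet\eta$ is already recorded, and $\omega\diamond\eta$ involves no choice of lift, so is automatically well-defined).

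For $\omega\bullet\eta$ I would first record the weak skew-symmetry of the curried maps $\tilde\omega_i$. Since $(\tilde\omega_i(\cdots)(f_1,\cdots,f_i),e)=\omega_i(\cdots,e;f_1,\cdots,f_i)$ for every $e\in L$, the defining relation of $\omega\in C^n(L)$ yields
\[
\bigl(\tilde\omega_i(\cdots e_a,e_{a+1}\cdots)+\tilde\omega_i(\cdots e_{a+1},e_a\cdots)\bigr)(f)\ \equiv\ -\,\tilde\omega_{i+1}(\cdots\widehat{e_a},\widehat{e_{a+1}}\cdots)\bigl((e_a,e_{a+1}),f\bigr)\pmod{\Ker\phi}.
\]
Because any element of $\Ker\phi$ pairs trivially with all of $S^\bullet(Z)\otimes L$ under the extended symmetric product, this congruence becomes an honest equality once both sides are fed into $\langle\,\cdot\,\rangle$ — the same observation that makes $\omega\bullet\eta$ independent of the choice of $\tilde\omega_i$. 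Then I would expand $(\omega\bullet\eta)_k(\cdots e_a,e_{a+1}\cdots)+(\omega\bullet\eta)_k(\cdots e_{a+1},e_a\cdots)$ and split the shuffle sum according to whether $e_a,e_{a+1}$ are both fed to $\tilde\omega_i$, both to $\tilde\eta_j$, or split between the two factors. In the two split cases the same shuffled sequence arises from the two orderings with opposite sign and cancels, exactly as in Step~1; in the two same-factor cases the displayed relation converts $\tilde\omega_i,\tilde\eta_j$ into $\tilde\omega_{i+1},\tilde\eta_{j+1}$ with $(e_a,e_{a+1})$ inserted among the $Z$-arguments. Reindexing $i+1$ to $l$ (resp. $j+1$ to $l$) and recombining the shuffles over the $L$- and over the $Z$-arguments collapses everything to $-(\omega\bullet\eta)_{k+1}(\cdots\widehat{e_a},\widehat{e_{a+1}}\cdots;(e_a,e_{a+1}),\cdots)$, as required.

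For $\omega\diamond\eta$ the structure is the same, but I would use the weak skew-symmetry of the genuine components $\omega_{i+1}$ and $\eta_j$ directly. Expanding $(\omega\diamond\eta)_k(\cdots e_a,e_{a+1}\cdots)+(\omega\diamond\eta)_k(\cdots e_{a+1},e_a\cdots)$ and splitting by where $e_a,e_{a+1}$ land, the split-factor terms again cancel, while the same-factor terms turn $\omega_{i+1}\circ\eta_j$ into $\omega_{i+2}\circ\eta_j$ (resp. $\omega_{i+1}\circ\eta_{j+1}$), with $(e_a,e_{a+1})$ now occupying a $Z$-slot of $\omega_{i+2}$ (resp. $\eta_{j+1}$). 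The degrees check out, since $\omega_{i+2}\circ\eta_j$ and $\omega_{i+1}\circ\eta_{j+1}$ both land in $\Hom(S^{k+1}(Z),S^\bullet(Z))$, matching the $(k+1)$-st level. The one point demanding care is that $(e_a,e_{a+1})$ enters through the symmetric $Z$-arguments, which are themselves redistributed by the shuffle $sh(l,k-1)$ built into $\circ$ and by the Leibniz-rule extension $\check\gamma$ applied in its first slot; I would verify that summing over these $Z$-shuffles together with the $L$-shuffles reproduces precisely the $\circ$-shuffle appearing in $(\omega\diamond\eta)_{k+1}$, so that the same-factor terms assemble into $-(\omega\diamond\eta)_{k+1}(\cdots;(e_a,e_{a+1}),\cdots)$.

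I expect this last matching, in the $\diamond$-case, to be the main obstacle. In the multiplication \eqref{eq:multiplication} the $Z$-arguments are split by an independent shuffle $\tau\in sh(i,j)$, so the reindexing is transparent; in $\circ$, by contrast, the $Z$-arguments are entangled with the Leibniz-rule extension $\check\gamma$ acting in the first slot, and the inserted element $(e_a,e_{a+1})$ must be tracked through both the internal $\circ$-shuffle and this extension. This forces the reindexing to be carried out explicitly rather than quoted from Step~1; everything else reduces to the bookkeeping already performed there.
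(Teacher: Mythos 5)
Your proposal follows essentially the same route as the paper's proof: expand $(\cdot)_k(\cdots e_a,e_{a+1}\cdots)+(\cdot)_k(\cdots e_{a+1},e_a\cdots)$, split the shuffle sum according to where $e_a,e_{a+1}$ land, cancel the split-factor terms in opposite-sign pairs, apply weak skew-symmetry to the same-factor terms (converting $\tilde\omega_i,\tilde\eta_j$ to $\tilde\omega_{i+1},\tilde\eta_{j+1}$, resp.\ $\omega_{i+1},\eta_j$ to $\omega_{i+2},\eta_{j+1}$, with $(e_a,e_{a+1})$ inserted as a $Z$-argument), reindex to recognize $-(\cdot)_{k+1}$, and then get $\{\omega,\eta\}$ by linearity. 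Your explicit handling of the $\Ker\phi$ ambiguity in $\tilde\omega_i$ is the same observation the paper records as the well-definedness of $\omega\bullet\eta$, just stated more carefully.
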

\begin{proof}
$\omega\bullet\eta$ is a cochain in $C^{n+m-2}(L)$ because: 
\begin{eqnarray*}
 &  & (\omega\bullet\eta)_{k}(e_{1}\cdots e_{a},e_{a+1}\cdots e_{n+m-2-2k};f_{1}\cdots f_{k})+(\omega\bullet\eta)_{k}(\cdots e_{a+1},e_{a}\cdots;\cdots)\\
 & = & (-1)^{m-1}\sum_{i+j=k}\{\sum_{\sigma,a\in\omega,a+1\in\eta}(-1)^{\sigma}\langle\tilde{\omega_{i}}(e_{\sigma(1)}\cdots e_{a}\cdots)\cdot\tilde{\eta_{j}}(e_{\sigma(n-2i)}\cdots e_{a+1}\cdots e_{\sigma(n+m-2-2k)})\rangle\\
 &  & \quad+\sum_{\sigma,a\in\eta,a+1\in\omega}(-1)^{\sigma}\langle\tilde{\omega_{i}}(e_{\sigma(1)}\cdots e_{a}\cdots)\cdot\tilde{\eta_{j}}(e_{\sigma(n-2i)}\cdots e_{a+1}\cdots e_{\sigma(n+m-2-2k)})\rangle\}\\
 &  & +(-1)^{m-1}\sum_{i+j=k}\{\sum_{\sigma,a\in\eta,a+1\in\omega}(-1)^{\sigma}\langle\tilde{\omega_{i}}(e_{\sigma(1)}\cdots e_{a+1},\cdots)\cdot\tilde{\eta_{j}}(e_{\sigma(n-2i)}\cdots,e_{a}\cdots e_{\sigma(n+m-2-2k)})\rangle\\
 &  & \quad+\sum_{\sigma,a\in\omega,a+1\in\eta}(-1)^{\sigma}\langle\tilde{\omega_{i}}(e_{\sigma(1)}\cdots e_{a+1}\cdots)\cdot\tilde{\eta_{j}}(e_{\sigma(n-2i)}\cdots e_{a}\cdots e_{\sigma(n+m-2-2k)})\rangle\}\\
 &  & +(-1)^{m-1}\sum_{i+j=k}\{\sum_{\sigma,a\in\omega,a+1\in\omega}(-1)^{\sigma}\langle\tilde{\omega_{i}}(e_{\sigma(1)}\cdots e_{a},e_{a+1}\cdots)\cdot\tilde{\eta_{j}}(e_{\sigma(n-2i)}\cdots e_{\sigma(n+m-2-2k)})\rangle\\
 &  & \quad+\sum_{\sigma,a\in\omega,a+1\in\omega}(-1)^{\sigma}\langle\tilde{\omega_{i}}(e_{\sigma(1)}\cdots e_{a+1},e_{a}\cdots)\cdot\tilde{\eta_{j}}(e_{\sigma(n-2i)}\cdots e_{\sigma(n+m-2-2k)})\rangle\}\\
 &  & +(-1)^{m-1}\sum_{i+j=k}\{\sum_{\sigma,a\in\eta,a+1\in\eta}(-1)^{\sigma}\langle\tilde{\omega_{i}}(e_{\sigma(1)}\cdots)\cdot\tilde{\eta_{j}}(e_{\sigma(n-2i)}\cdots e_{a},e_{a+1}\cdots e_{\sigma(n+m-2-2k)})\rangle\\
 &  & \quad+\sum_{\sigma,a\in\eta,a+1\in\eta}(-1)^{\sigma}\langle\tilde{\omega_{i}}(e_{\sigma(1)}\cdots)\cdot\tilde{\eta_{j}}(e_{\sigma(n-2i)}\cdots e_{a+1},e_{a}\cdots)\rangle\}\\
 & = & (-1)^{m-1}\sum_{i+j=k}\sum_{{\sigma\in sh(n-2i-1,m-2j-1)\atop a\in\omega,a+1\in\omega}}(-1)^{\sigma}(-1)\\
 &  & \quad\langle\tilde{\omega_{i+1}}(e_{\sigma(1)}\cdots\widehat{e_{a}},\widehat{e_{a+1}}\cdots e_{\sigma(n-2i-1)};(e_{a},e_{a+1}))\cdot\tilde{\eta_{j}}(e_{\sigma(n-2i)}\cdots)\rangle\\
 &  & +(-1)^{m-1}\sum_{i+j=k}\sum_{{\sigma\in sh(n-2i-1,m-2j-1)\atop a\in\omega,a+1\in\omega}}(-1)^{\sigma}(-1)\\
 &  & \quad\langle\tilde{\omega_{i}}(e_{\sigma(1)}\cdots)\cdot\tilde{\eta_{j+1}}(e_{\sigma(n-2i)}\cdots\widehat{e_{a}},\widehat{e_{a+1}}\cdots e_{\sigma(n+m-2-2k)};(e_{a},e_{a+1}))\rangle\\
 & = & (-1)^{m}\sum_{i^{\prime}+j=k+1}\sum_{\sigma^{\prime}\in sh(n-2i^{\prime}-1,m-2j-1)}(-1)^{\sigma^{\prime}}\sum_{{\tau\in sh(i^{\prime},j)\atop (e_{a},e_{a+1})\in\omega}}\\
 &  & \quad(\tilde{\omega_{i^{\prime}}}(e_{\sigma^{\prime}(1)}\cdots)((e_{a},e_{a+1}),f_{\tau(1)}\cdots),\tilde{\eta_{j}}(e_{\sigma^{\prime}(n-2i^{\prime})}\cdots)(f_{\tau(i^{\prime})}\cdots f_{\tau(k)}))\\
 &  & +(-1)^{m}\sum_{i+j^{\prime}=k+1}\sum_{\sigma^{\prime}\in sh(n-2i-1,m-2j^{\prime}-1)}(-1)^{\sigma^{\prime}}\sum_{{\tau\in sh(i^{\prime},j)\atop (e_{a},e_{a+1})\in\eta}}\\
 &  & \quad(\tilde{\omega_{i}}(e_{\sigma^{\prime}(1)}\cdots)(f_{\tau(1)}\cdots),\tilde{\eta_{j^{\prime}}}(e_{\sigma^{\prime}(n-2i)}\cdots)((e_{a},e_{a+1}),f_{\tau(i+1)}\cdots f_{\tau(k)}))\\
 & = & -(\omega\bullet\eta)_{k+1}(e_{1},\cdots,\widehat{e_{a}},\widehat{e_{a+1}},\cdots,e_{n+m-2-2k};(e_{a},e_{a+1}),f_{1},\cdots,f_{k}).
\end{eqnarray*}

$\omega\diamond\eta$ is a cochain in $C^{n+m-2}(L)$ because:

\begin{eqnarray*}
 &  & (\omega\diamond\eta)_{k}(\cdots,e_{a},e_{a+1}\cdots;f_{1}\cdots f_{k})+(\omega\diamond\eta)_{k}(\cdots e_{a+1},e_{a}\cdots;\cdots)\\
 & = & \sum_{i+j=k}\{\sum_{\sigma,a\in\omega,a+1\in\eta}(-1)^{\sigma}\omega_{i+1}(e_{\sigma(1)}\cdots e_{a}\cdots)\circ\eta_{j}(e_{\sigma(n-2i-1)}\cdots e_{a+1}\cdots)\\
 &  & \quad+\sum_{\sigma,a\in\eta,a+1\in\omega}(-1)^{\sigma}\omega_{i+1}(e_{\sigma(1)}\cdots e_{a}\cdots)\circ\eta_{j}(e_{\sigma(n-2i-1)}\cdots e_{a+1}\cdots)\}\\
 &  & +\sum_{i+j=k}\{\sum_{\sigma,a\in\eta,a+1\in\omega}(-1)^{\sigma}\omega_{i+1}(e_{\sigma(1)}\cdots e_{a+1}\cdots)\circ\eta_{j}(e_{\sigma(n-2i-1)}\cdots e_{a}\cdots)\\
 &  & \quad+\sum_{\sigma,a\in\omega,a+1\in\eta}(-1)^{\sigma}\omega_{i+1}(e_{\sigma(1)}\cdots e_{a+1}\cdots)\circ\eta_{j}(e_{\sigma(n-2i-1)}\cdots e_{a}\cdots)\}\\
 &  & +\sum_{i+j=k}\{\sum_{\sigma,a\in\omega,a+1\in\omega}(-1)^{\sigma}\omega_{i+1}(e_{\sigma(1)}\cdots e_{a},e_{a+1}\cdots)\circ\eta_{j}(e_{\sigma(n-2i-1)}\cdots)\\
 &  & \quad+\sum_{\sigma,a\in\omega,a+1\in\omega}(-1)^{\sigma}\omega_{i+1}(e_{\sigma(1)}\cdots e_{a+1},e_{a}\cdots)\circ\eta_{j}(e_{\sigma(n-2i-1)}\cdots)\}\\
 &  & +\sum_{i+j=k}\{\sum_{\sigma,a\in\eta,a+1\in\eta}(-1)^{\sigma}\omega_{i+1}(e_{\sigma(1)}\cdots)\circ\eta_{j}(e_{\sigma(n-2i-1)}\cdots e_{a},e_{a+1}\cdots)\\
 &  & \quad+\sum_{\sigma,a\in\eta,a+1\in\eta}(-1)^{\sigma}\omega_{i+1}(e_{\sigma(1)}\cdots)\circ\eta_{j}(e_{\sigma(n-2i-1)}\cdots e_{a+1},e_{a}\cdots)\}\\
 & = & \sum_{i+j=k}\sum_{{\sigma\in sh(n-2i-2,m-2j)\atop a\in\omega,a+1\in\omega}}(-1)^{\sigma}(-1)\\
 &  & \quad\omega_{i+2}(e_{\sigma(1)}\cdots\widehat{e_{a}},\widehat{e_{a+1}}\cdots e_{\sigma(n-2i-2)};(e_{a},e_{a+1}))\circ\eta_{j}(e_{\sigma(n-2i-1)}\cdots)\\
 &  & +\sum_{i+j=k}\sum_{{\sigma\in sh(n-2i-2,m-2j)\atop a\in\eta,a+1\in\eta}}(-1)^{\sigma}(-1)\\
 &  & \quad\omega_{i+1}(e_{\sigma(1)}\cdots e_{\sigma(n-2i-2)})\circ\eta_{j+1}(e_{\sigma(n-2i-1)}\cdots\widehat{e_{a+1}},\widehat{e_{a}}\cdots;(e_{a},e_{a+1}))\\
 & = & \sum_{i^{\prime}+j=k+1}\sum_{\sigma^{\prime}\in sh(n-2i^{\prime}-2,m-2j)}(-1)^{\sigma^{\prime}}\sum_{{\tau\in sh(j,i^{\prime}-1)\atop (e_{a},e_{a+1})\in\omega}}\\
 &  & \quad\omega_{i^{\prime}+1}(e_{\sigma^{\prime}(1)}\cdots;\eta_{j}(e_{\sigma^{\prime}(n-2i^{\prime}-1)}\cdots;f_{\tau(1)}\cdots),(e_{a},e_{a+1}),f_{\tau(j+1)}\cdots f_{\tau(k)})\\
 &  & +\sum_{i+j^{\prime}=k+1}\sum_{\sigma^{\prime}\in sh(n-2i-2,m-2j^{\prime})}(-1)^{\sigma^{\prime}}\sum_{{\tau\in sh(j,i^{\prime}-1)\atop (e_{a},e_{a+1})\in\eta}}\\
 &  & \quad\omega_{i+1}(e_{\sigma^{\prime}(1)}\cdots;\eta_{j^{\prime}}(e_{\sigma^{\prime}(n-2i-1)}\cdots;(e_{a},e_{a+1}),f_{\tau(1)}\cdots),f_{\tau(j^{\prime})}\cdots f_{\tau(k)})\\
 & = & -(\omega\diamond\eta)_{k+1}(e_{1},\cdots,\widehat{e_{a}},\widehat{e_{a+1}},\cdots,e_{n+m-2-2k};(e_{a},e_{a+1}),f_{1},\cdots,f_{k}).
\end{eqnarray*}

So $\{\omega,\eta\}=\omega\bullet\eta+\omega\diamond\eta-(-1)^{nm}\eta\diamond\omega$
is also a cochain in $C^{n+m-2}(L)$.
\end{proof}
Proof of theorem \ref{thm:poisson bracket Leibniz}:
\begin{proof}
1) By the lemmas above, in order for $\tilde{C}(L)$ to be a graded
Poisson algebra, we need to prove the following:

(1). For any two representable cochains $\omega,\eta$, $\{\omega,\eta\}=-(-1)^{nm}\{\eta,\omega\}$,

(2). For any representable cochains $\omega,\eta,\lambda$, 
\[
\{\omega,\eta\lambda\}=\{\omega,\eta\}\lambda+(-1)^{nm}\eta\{\omega,\lambda\},
\]

(3). The bracket of any two representable cochains is still a representable
cochain, and
\[
\{\omega,\{\eta,\lambda\}=\{\{\omega,\eta\},\lambda\}+(-1)^{nm}\{\eta,\{\omega,\lambda\}\}.
\]

For (1), it suffices to prove $\omega\bullet\eta=-(-1)^{nm}\eta\bullet\omega$. 

$\forall\sigma\in sh(n-2i-1,m-2j-1)$, switching the first $n-2i-1$
arguments with the last $m-2j-1$ arguments results in a sign difference
$(-1)^{(n-1)(m-1)}$, so by definition there is merely a sign difference
between $\omega\bullet\eta$ and $\eta\bullet\omega$ of $(-1)^{n-m+(n-1)(m-1)}=(-1)^{nm+1}$.

Thus (1) is proved.

For (2), we need to prove that $\{\omega,\bullet\}$ is a graded derivative.

\begin{eqnarray*}
 &  & \{\omega,\eta\lambda\}_{k}(e_{1},\cdots,e_{n+m+l-2-2k};f_{1},\cdots,f_{k})\\
 & = & (\omega\bullet\eta\lambda)_{k}(\cdots)+(\omega\diamond\eta\lambda)_{k}(\cdots)+(-1)^{n(m+l)+1}(\eta\lambda\diamond\omega)_{k}(\cdots)
\end{eqnarray*}

We calculate the three parts above respectively:

\begin{eqnarray*}
 &  & (\omega\bullet\eta\lambda)_{k}(e_{1},\cdots,e_{n+m+l-2-2k};f_{1},\cdots,f_{k})\\
 & = & (-1)^{m+l+1}\sum_{{a+b=k\atop {\sigma\in sh(n-2a-1,m+l-2b-1)\atop \tau\in sh(a,b)}}}(\tilde{\omega_{a}}(e_{\sigma(1)},\cdots;f_{\tau(1)},\cdots),\widetilde{(\eta\lambda)_{b}}(e_{\sigma(n-2a)},\cdots;f_{\tau(a+1)},\cdots))\\
 & = & (-1)^{m+l+1}\sum_{{a+b+c=k\atop {\sigma\in sh(n-2a-1,m-2b-1,l-2c)\atop \tau\in sh(a,b,c)}}}(-1)^{\sigma+l}\tilde{(\omega_{a}}(e_{\sigma(1)}\cdots),\tilde{\eta_{b}}(e_{\sigma(n-2a)}\cdots)\lambda_{c}(e_{\sigma(n+m-2a-2b-1)}\cdots))\\
 &  & +(-1)^{m+l+1}\sum_{{a+b+c=k\atop {\sigma\in sh(n-2a-1,m-2b,l-2c-1)\atop \tau\in sh(a,b,c)}}}(-1)^{\sigma}(\tilde{\omega_{a}}(e_{\sigma(1)}\cdots),\eta_{b}(e_{\sigma(n-2a)}\cdots)\tilde{\lambda_{c}}(e_{\sigma(n+m-2a-2b)}\cdots))\\
 & = & \sum_{{a+c=k\atop {\sigma\in sh(n+m-2a-2,l-2c)\atop \tau\in sh(a,c)}}}(-1)^{\sigma}(\omega\bullet\eta)_{a}(e_{\sigma(1)},\cdots)\lambda_{c}(e_{\sigma(n+m-2a-1)},\cdots)\\
 &  & +\sum_{{b+a=k\atop {\sigma\in sh(m-2b,n+l-2a-2)\atop \tau\in sh(b,a)}}}(-1)^{\sigma+(n-1)m}(-1)^{m}\eta_{b}(e_{\sigma(1)},\cdots)(\omega\bullet\lambda)_{a}(e_{\sigma(m-2b+1)},\cdots)\\
 & = & \big((\omega\bullet\eta)\cdot\lambda\big)_{k}(\cdots)+(-1)^{nm}\big(\eta\cdot(\omega\bullet\lambda)\big)_{k}(\cdots)
\end{eqnarray*}

\begin{eqnarray*}
 &  & (\omega\diamond\eta\lambda)_{k}(e_{1},\cdots,e_{n+m+l-2-2k};f_{1},\cdots,f_{k})\\
 & = & \sum_{{a+b=k\atop {\sigma\in sh(n-2a-2,m+l-2b)\atop \tau\in sh(b,a)}}}(-1)^{\sigma}\omega_{a+1}(e_{\sigma(1)}\cdots e_{\sigma(n-2a-2)};(\eta\lambda)_{b}(e_{\sigma(n-2a-1)}\cdots;f_{\tau(1)}\cdots),f_{\tau(b+1)}\cdots)\\
 & = & \sum_{{a+b+c=k\atop {\sigma\in sh(n-2a-2,m-2b,l-2c)\atop \tau\in sh(b,c,a)}}}(-1)^{\sigma}\omega_{a+1}(e_{\sigma(1)}\cdots;\eta_{b}(e_{\sigma(n-2a-1)}\cdots)\lambda_{c}(e_{\sigma(n+m-2a-2b-1)}\cdots),\cdots)\\
 & = & \sum_{{a+b+c=k\atop {\sigma\in sh(n-2a-2,m-2b,l-2c)\atop \tau\in sh(b,a,c)}}}(-1)^{\sigma}\omega_{a+1}(e_{\sigma(1)}\cdots;\eta_{b}(e_{\sigma(n-2a-1)}\cdots),\cdots)\lambda_{c}(e_{\sigma(n+m-2a-2b-1)}\cdots)\\
 &  & +\sum_{{a+b+c=k\atop {\sigma\in sh(m-2b,n-2a-2,l-2c)\atop \tau\in sh(b,c,a)}}}(-1)^{\sigma+nm}\eta_{b}(e_{\sigma(1)}\cdots)\omega_{a+1}(e_{\sigma(m-2b+1)}\cdots;\lambda_{c}(e_{\sigma(n+m-2a-2b-1)}\cdots),\cdots)\\
 & = & \sum_{{a+c=k\atop {\sigma\in sh(n+m-2a-2,l-2c)\atop \tau\in sh(a,c)}}}(-1)^{\sigma}(\omega\diamond\eta)_{a}(e_{\sigma(1)},\cdots;f_{\tau(1)},\cdots)\lambda_{c}(e_{\sigma(n+m-2a-1)},\cdots;f_{\tau(a+1)},\cdots)\\
 &  & +(-1)^{nm}\sum_{{a+b=k\atop {\sigma\in sh(m-2b,n+l-2a-2)\atop \tau\in sh(b,a)}}}(-1)^{\sigma}\eta_{b}(e_{\sigma(1)},\cdots;f_{\tau(1)},\cdots)(\omega\diamond\lambda)_{a}(e_{\sigma(m-2b+1)},\cdots;f_{\tau(b+1)},\cdots)\\
 & = & \big((\omega\diamond\eta)\cdot\lambda\big)_{k}(\cdots)+(-1)^{nm}\big(\eta\cdot(\omega\diamond\lambda)\big)_{k}(\cdots)
\end{eqnarray*}
\begin{eqnarray*}
 &  & (\eta\lambda\diamond\omega)_{k}(e_{1},\cdots,e_{n+m+l-2-2k};f_{1},\cdots,f_{k})\\
 & = & \sum_{{a+c=k\atop {\sigma\in sh(m+l-2a-2,n-2c)\atop \tau\in sh(c,a)}}}(-1)^{\sigma}(\eta\lambda)_{a+1}(e_{\sigma(1)},\cdots;\omega_{c}(e_{\sigma(m+l-2a-1)},\cdots),\cdots)\\
 & = & \sum_{{a+b+c=k\atop {\sigma\in sh(m-2a-2,n-2c,l-2b)\atop \tau\in sh(c,a,b)}}}(-1)^{\sigma+nl}\eta_{a+1}(e_{\sigma(1)}\cdots;\omega_{c}(e_{\sigma(m-2a-1)}\cdots),\cdots)\lambda_{b}(e_{\sigma(n+m-2a-2c-1)}\cdots)\\
 &  & +\sum_{{a+b+c=k\atop {\sigma\in sh(m-2a,l-2b-2,n-2c)\atop \tau\in sh(a,c,b)}}}(-1)^{\sigma}\eta_{a}(e_{\sigma(1)}\cdots)\lambda_{b+1}(e_{\sigma(m-2a+1)}\cdots;\omega_{c}(e_{\sigma(m+l-2a-2b-1)}\cdots),\cdots)\\
 & = & \sum_{{a+b=k\atop {\sigma\in sh(m+n-2a-2,l-2b)\atop \tau\in sh(a,b)}}}(-1)^{\sigma}(-1)^{nl}(\eta\diamond\omega)_{a}(e_{\sigma(1)},\cdots;f_{\tau(1)},\cdots)\lambda_{b}(e_{\sigma(n+m-2a-1)},\cdots;f_{\tau(a+1)},\cdots)\\
 &  & +\sum_{{a+b=k\atop {\sigma\in sh(m-2a,n+l-2b-2)\atop \tau\in sh(a,b)}}}(-1)^{\sigma}\eta_{a}(e_{\sigma(1)},\cdots;f_{\tau(1)},\cdots)\cdot(\lambda\diamond\omega)_{b}(e_{\sigma(m-2a+1)},\cdots;f_{\tau(a+1)},\cdots)\\
 & = & (-1)^{nl}\big((\eta\diamond\omega)\cdot\lambda\big)_{k}(\cdots)+\big(\eta\cdot(\lambda\diamond\omega)\big)_{k}(\cdots)
\end{eqnarray*}

So
\begin{eqnarray*}
 &  & \{\omega,\eta\lambda\}_{k}(e_{1},\cdots,e_{n+m+l-2-2k};f_{1},\cdots,f_{k})\\
 & = & \big((\omega\bullet\eta)\cdot\lambda\big)_{k}(\cdots)+(-1)^{nm}\big(\eta\cdot(\omega\bullet\lambda)\big)_{k}(\cdots)\\
 &  & +\big((\omega\diamond\eta)\cdot\lambda\big)_{k}(\cdots)+(-1)^{nm}\big(\eta\cdot(\omega\diamond\lambda)\big)_{k}(\cdots)\\
 &  & +(-1)^{nm+1}\big((\eta\diamond\omega)\cdot\lambda\big)_{k}(\cdots)+(-1)^{nm}(-1)^{nl+1}\big(\eta\cdot(\lambda\diamond\omega)\big)_{k}(\cdots)\\
 & = & (\{\omega,\eta\}\cdot\lambda)_{k}(\cdots)+(-1)^{nm}(\eta\cdot\{\omega,\lambda\})_{k}(\cdots)
\end{eqnarray*}

$\{\omega,\bullet\}$ is a graded derivative, (2) is proved.

For (3), in order for $\{\omega,\eta\}$ to be a representable cochain,
we need to prove 
\[
\{\omega,\eta\}_{k}(e_{1},\cdots,e_{n+m-2k};f_{1},\cdots,f_{k})=(e_{n+m-2k},\bullet),\ \forall k.
\]
\begin{eqnarray*}
 &  & \{\omega,\eta\}_{k}(e_{1},\cdots,e_{n+m-2-2k};f_{1},\cdots,f_{k})\\
 & = & (-1)^{m+1}\sum_{{a+b=k\atop {\sigma\in sh(n-2a-1,m-2b-1)\atop \tau\in sh(a,b)}}}(-1)^{\sigma}(\tilde{\omega_{a}}(e_{\sigma(1)},\cdots;f_{\tau(1)},\cdots),\tilde{\eta_{b}}(e_{\sigma(n-2a)},\cdots;f_{\tau(a+1)},\cdots))\\
 &  & +\sum_{{a+b=k\atop {\sigma\in sh(n-2a-2,m-2b)\atop \tau\in sh(b,a)}}}(-1)^{\sigma}\omega_{a+1}(e_{\sigma(1)},\cdots;\eta_{b}(e_{\sigma(n-2a-1)},\cdots;f_{\tau(1)},\cdots),f_{\tau(b+1)},\cdots)\\
 &  & +(-1)^{nm+1}\sum_{{a+b=k\atop {\sigma\in sh(m-2a-2,n-2b)\atop \tau\in sh(b,a)}}}(-1)^{\sigma}\eta_{a+1}(e_{\sigma(1)},\cdots;\omega_{b}(e_{\sigma(m-2a-1)},\cdots;f_{\tau(1)},\cdots),f_{\tau(b+1)},\cdots)\\
 & = & (-1)^{m+1}\sum_{{a+b=k\atop {\sigma\in sh(n-2a-2,m-2b-1)\atop \tau\in sh(b,a)}}}(-1)^{\sigma+m+1}\\
 &  & \quad\omega_{a}(e_{\sigma(1)}\cdots e_{\sigma(n-2a-2)},e_{n+m-2-2k},\tilde{\eta_{b}}(e_{\sigma(n-2a-1)}\cdots;f_{\tau(1)}\cdots);f_{\tau(b+1)}\cdots)\\
 &  & +(-1)^{m+1}\sum_{{a+b=k\atop {\sigma\in sh(m-2b-2,n-2a-1)\atop \tau\in sh(a,b)}}}(-1)^{\sigma+(n-1)m}\\
 &  & \quad\eta_{b}(e_{\sigma(1)}\cdots e_{\sigma(m-2b-2)},e_{n+m-2-2k},\tilde{\omega_{a}}(e_{\sigma(m-2b-1)}\cdots;f_{\tau(1)}\cdots);f_{\tau(a+1)}\cdots)\\
 &  & +\{(e_{n+m-2-2k},\bullet)+\sum_{{a+b=k\atop {\sigma\in sh(n-2a-2,m-2b-1)\atop \tau\in sh(b,a)}}}(-1)^{\sigma}\\
 &  & \qquad\omega_{a+1}(e_{\sigma(1)},\cdots;(\tilde{\eta_{b}}(e_{\sigma(n-2a-1)},\cdots;f_{\tau(1)},\cdots),e_{n+m-2-2k}),f_{\tau(b+1)},\cdots)\}\\
 &  & +\{(e_{n+m-2-2k},\bullet)+(-1)^{nm+1}\sum_{{a+b=k\atop {\sigma\in sh(m-2a-2,n-2b-1)\atop \tau\in sh(b,a)}}}(-1)^{\sigma}\\
 &  & \qquad\eta_{a+1}(e_{\sigma(1)},\cdots;(\tilde{\omega_{b}}(e_{\sigma(m-2a-1)},\cdots;f_{\tau(1)},\cdots),e_{n+m-2-2k}),f_{\tau(b+1)},\cdots)\}\\
 & = & (e_{n+m-2-2k},\bullet)\\
 &  & +\sum_{{a+b=k\atop {\sigma\in sh(n-2a-2,m-2b-1)\atop \tau\in sh(b,a)}}}(-1)^{\sigma}\omega_{a}(\cdots,e_{n+m-2-2k},\tilde{\eta_{b}}(\cdots);\cdots)\\
 &  & +\sum_{{a+b=k\atop {\sigma\in sh(m-2b-2,n-2a-1)\atop \tau\in sh(a,b)}}}(-1)^{\sigma+nm+1}\eta_{b}(\cdots,e_{n+m-2-2k},\tilde{\omega_{a}}(\cdots);\cdots)\\
 &  & +\sum_{{a+b=k\atop {\sigma\in sh(n-2a-2,m-2b-1)\atop \tau\in sh(b,a)}}}(-1)^{\sigma+1}\\
 &  & \quad\{\omega_{a}(\cdots,e_{n+m-2-2k},\tilde{\eta_{b}}(\cdots);\cdots)+\omega_{a}(\cdots,\tilde{\eta_{b}}(\cdots),e_{n+m-2-2k};\cdots)\}\\
 &  & +\sum_{{a+b=k\atop {\sigma\in sh(m-2a-2,n-2b-1)\atop \tau\in sh(b,a)}}}(-1)^{\sigma+nm}\\
 &  & \quad\{\eta_{a}(\cdots,e_{n+m-2-2k},\tilde{\omega_{b}}(\cdots);\cdots)+\eta_{a}(\cdots,\tilde{\omega_{b}}(\cdots),e_{n+m-2-2k};\cdots)\}\\
 & = & (e_{n+m-2-2k},\bullet)
\end{eqnarray*}

Thus (3) is proved.
\end{proof}
If $\phi$ is an isomorphism(i.e. the symmetric product of $S^{\bullet}(Z)\otimes L$
is strongly non-degenerate), any $\omega\in C(L)$ is a representable
cochain, so $C(L)=\tilde{C}(L)$ is a graded commutative Poisson algebra.

\section{Derived brackets}

In this section we prove that the Leibniz bracket of a fat Leibniz
algebra is a derived bracket.

First we prove the following:
\begin{prop}
$\{\Theta,\eta\}=-d\eta,\ \forall\eta\in\tilde{C}(L)$.\end{prop}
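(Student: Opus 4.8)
The plan is to prove the identity componentwise, i.e.\ to show $\{\Theta,\eta\}_k=-(d\eta)_k=-(d_0\eta)_k-(\delta\eta)_k$ for every $k$, by expanding the three pieces of $\{\Theta,\eta\}=\Theta\bullet\eta+\Theta\diamond\eta-(-1)^{3m}\eta\diamond\Theta$ and matching them against $d_0\eta$ and $\delta\eta$. First I would record explicit representatives of $\Theta$. From $\Theta_0(e_1,e_2,e_3)=(e_1\circ e_2,e_3)$, $\Theta_1(e;f)=-(e,f)$ and the definition of $\phi=(\cdot,\cdot)$ one reads off $\tilde\Theta_0(e_1,e_2)=e_1\circ e_2$ and $\tilde\Theta_1()(f)=-f$; in particular $\Theta$ is representable, so the bracket is defined. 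Since $\Theta$ has degree $3$, only $i=0,1$ survive in $\Theta\bullet\eta$ and $\Theta\diamond\eta$, and only $j=0,1$ in $\eta\diamond\Theta$, which keeps the bookkeeping finite.

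The clean piece is $\Theta\diamond\eta$, where only $i=0$ contributes and the operation $\circ$ applies the derivation $\check{\Theta_1(e)}$ to the $S^\bullet(Z)$-value of $\eta_k$. The key observation is $\check{\Theta_1(e)}=-\rho(e)$, since both are the derivation of $S^\bullet(Z)$ extending $f\mapsto -e\circ f$ (using $(e,f)=e\circ f$ for $f\in Z$, exactly as in the computation of $(d\zeta)_1$). Summing over $sh(1,m-2k)$ then gives $(\Theta\diamond\eta)_k=\sum_a(-1)^a\rho(e_a)\eta_k(\cdots\widehat{e_a}\cdots)$, which is precisely $-1$ times the $\rho$-part of $(d_0\eta)_k$.

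The heart of the argument treats $\Theta\bullet\eta$ and $\eta\diamond\Theta$ together, organised by which generator of $\Theta$ is used. Using representability in the form $(e,\tilde\eta_k(e_1,\dots;f_1,\dots))=\eta_k(e_1,\dots,e;f_1,\dots)$, the $i=0$ part of $\Theta\bullet\eta$ becomes a sum of terms $\eta_k$ in which $e_{\sigma(1)}\circ e_{\sigma(2)}$ sits in the \emph{last} $L$-slot, and the $i=1$ part becomes a sum of terms $\eta_{k-1}$ in which a repeated argument $f_i\in Z$ sits (as an $L$-argument) in the last slot. By contrast $(d_0\eta)_k$ places $e_a\circ e_b$ in the slot of $e_b$ and $(\delta\eta)_k$ places $f_i$ in the \emph{first} slot. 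I would move each inserted argument into its canonical position using the weak skew-symmetry relation satisfied by $\eta$,
\[
\eta_k(\dots,x,y,\dots)+\eta_k(\dots,y,x,\dots)=-\eta_{k+1}(\dots\widehat{x},\widehat{y}\dots;(x,y),\dots).
\]
The correction terms carry the symmetric products $(e_a\circ e_b,e_c)=\Theta_0(e_a,e_b,e_c)$ (for $i=0$, landing in $\eta_{k+1}$) and $(f_i,e_c)=e_c\circ f_i=-\Theta_1(e_c;f_i)$ (for $i=1$, landing in $\eta_k$), and these are exactly what $\eta\diamond\Theta$ produces: its $j=0$ part feeds $\Theta_0(\dots)\in Z$ into the extra symmetric slot of $\eta_{k+1}$, and its $j=1$ part feeds $-(e,f)$ into that of $\eta_k$. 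Matching the shuffle indices and collecting the prefactors (the $(-1)^{m-1}$ from $\bullet$ and the $-(-1)^{3m}=-(-1)^m$ from $\eta\diamond\Theta$), the correction terms cancel and the main terms assemble into $-1$ times the $e_a\circ e_b$-part of $d_0\eta$ and into $-\delta\eta$.

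The main obstacle is precisely this last matching. The weak-skew repositioning changes the $\eta$-index from $k$ to $k+1$ and reshuffles the surviving arguments, so one must reconcile the shuffle signs and the $sh$-index ranges coming from the correction terms with those coming from $\eta\diamond\Theta$, and confirm the global sign $-(-1)^{3m}$. The device that keeps this tractable is to work with the arguments in a fixed order and to push each inserted element step by step, applying the displayed weak skew-symmetry of $\eta$ once per transposition, so that at each stage the generated $\eta_{k+1}$ (resp.\ $\eta_k$) term can be identified term-by-term with a summand of $\eta\diamond\Theta$; summing the telescoped signs then yields exactly $-(d_0+\delta)\eta=-d\eta$.
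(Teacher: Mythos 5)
Your proposal is correct and follows essentially the same route as the paper: it splits $\{\Theta,\eta\}$ into $\Theta\bullet\eta+\Theta\diamond\eta+(-1)^{m+1}\eta\diamond\Theta$, identifies the representatives $\tilde{\Theta}_0(e_1,e_2)=e_1\circ e_2$, $\tilde{\Theta}_1(f)=-f$ and the key fact $\check{\Theta}_1(e)=-\rho(e)$, uses representability to turn the $\bullet$-pairings into insertions of $e_a\circ e_b$ and $f_i$ as extra $L$-arguments, and then matches these against $d_0\eta$ and $\delta\eta$ via the weak skew-symmetry relation, with the symmetric-slot correction terms cancelling against $\eta\diamond\Theta$. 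The only cosmetic difference is the direction of the telescoping (you move the inserted arguments from the last slot to their canonical positions, while the paper expands the symmetric-slot terms of $\eta\diamond\Theta$ into adjacent-pair insertions and telescopes toward the boundary), which is the same computation read in reverse.
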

\begin{proof}
It is obvious that $\Theta$ is a representable cochain.

\begin{eqnarray*}
 &  & (\Theta\bullet\eta)_{k}(e_{1},\cdots,e_{m+1-2k};f_{1},\cdots,f_{k})\\
 & = & (-1)^{m-1}\sum_{\sigma\in sh(2,m-2k-1)}(-1)^{\sigma}(\tilde{\Theta_{0}}(e_{\sigma(1)},e_{\sigma(2)}),\tilde{\eta_{k}}(e_{\sigma(3)}\cdots e_{\sigma(m+1-2k)};\cdots))\\
 &  & +(-1)^{m-1}\sum_{\tau\in sh(1,k-1)}(\tilde{\Theta_{1}}(f_{\tau(1)}),\tilde{\eta_{k-1}}(e_{1},\cdots,e_{m+1-2k};f_{\tau(2)},\cdots,f_{\tau(k)}))\\
 & = & (-1)^{m-1}\sum_{a<b}(-1)^{a+b+1}(e_{a}\circ e_{b},\tilde{\eta_{k}}(e_{1},\cdots,\hat{e_{a}},\cdots,\hat{e_{b}},\cdots e_{m+1-2k};f_{1},\cdots f_{k}))\\
 &  & +(-1)^{m-1}\sum_{i}(-1)(f_{i},\tilde{\eta_{k-1}}(e_{1},\cdots,e_{m+1-2k};f_{1},\cdots,\hat{f_{i}},\cdots,f_{k}))\\
 & = & (-1)^{m}\sum_{a<b}(-1)^{a+b}\eta_{k}(e_{1},\cdots,\hat{e_{a}},\cdots,\hat{e_{b}},\cdots,e_{m+1-2k},e_{a}\circ e_{b};f_{1},\cdots,f_{k})\\
 &  & +(-1)^{m}\sum_{i}\eta_{k-1}(e_{1},\cdots,e_{m+1-2k},f_{i};f_{1},\cdots,\hat{f_{i}},\cdots,f_{k})
\end{eqnarray*}

\begin{eqnarray*}
 &  & (\Theta\diamond\eta)_{k}(e_{1},\cdots,e_{m+1-2k};f_{1},\cdots,f_{k})\\
 & = & \sum_{a}(-1)^{a+1}\Theta_{1}(e_{a})\circ\eta_{k}(e_{1},\cdots,\hat{e_{a}},\cdots,e_{m+1-2k})\\
 & = & \sum_{a}(-1)^{a+1}(-1)(e_{a},\eta_{k}(e_{1},\cdots,\hat{e_{a}},\cdots e_{m+1-2k};f_{1},\cdots,f_{k}))\\
 & = & \sum_{a}(-1)^{a}\rho(e_{a})\eta_{k}(e_{1},\cdots,\hat{e_{a}},\cdots,e_{m+1-2k};f_{1},\cdots,f_{k})
\end{eqnarray*}

\begin{eqnarray*}
 &  & (-1)^{m+1}(\eta\diamond\Theta)_{k}(e_{1},\cdots,e_{m+1-2k};f_{1},\cdots,f_{k})\\
 & = & (-1)^{m+1}\sum_{\sigma\in sh(m-2k-2,3)}(-1)^{\sigma}\\
 &  & \quad\eta_{k+1}(e_{\sigma(1)},\cdots,e_{\sigma(m-2k-2)})\circ\Theta_{0}(e_{\sigma(m-2k-1)},e_{\sigma(m-2k)},e_{\sigma(m-2k+1)})\\
 &  & +(-1)^{m+1}\sum_{a}(-1)^{a+m+1}\eta_{k}(e_{1},\cdots\hat{e_{a}},\cdots,e_{m+1-2k})\circ\Theta_{1}(e_{a})\\
 & = & \sum_{a<b<c}(-1)^{a+b+c+1}\eta_{k+1}(e_{1}\cdots\hat{e_{a}}\cdots\hat{e_{b}}\cdots\hat{e_{c}}\cdots e_{m+1-2k};(e_{a}\circ e_{b},e_{c}),f_{1}\cdots f_{k})\\
 &  & +\sum_{a}(-1)^{a}\sum_{i}\eta_{k}(e_{1},\cdots,\hat{e_{a}},\cdots,e_{m+1-2k};-(e_{a},f_{i}),f_{1},\cdots,\hat{f_{i}},\cdots,f_{k})\\
 & = & \sum_{a<b}(-1)^{a+b}\sum_{b<c<m+2-2k}(-1)^{c}\{\eta_{k}(\cdots,\hat{e_{a}},\cdots,\hat{e_{b}},\cdots,e_{c-1},e_{a}\circ e_{b},e_{c},\cdots;\cdots)\\
 &  & \quad+\eta_{k}(\cdots,\hat{e_{a}},\cdots,\hat{e_{b}},\cdots,e_{c-1},e_{c},e_{a}\circ e_{b},\cdots;\cdots)\}\\
 &  & +\sum_{i,a}(-1)^{a}\{\eta_{k-1}(\cdots e_{a-1},f_{i},e_{a},\cdots;\hat{f_{i}}\cdots)+\eta_{k-1}(\cdots e_{a-1},e_{a},f_{i},\cdots;\hat{f_{i}}\cdots)\}\\
 & = & \sum_{a<b}(-1)^{a+1}\{\eta_{k}(\cdots\hat{e_{a}}\cdots e_{a}\circ e_{b},\cdots;\cdots)+(-1)^{b+m}\eta_{k}(\cdots\hat{e_{a}}\cdots\hat{e_{b}}\cdots e_{a}\circ e_{b};\cdots)\}\\
 &  & -\sum_{i}\eta_{k-1}(f_{i},e_{1},\cdots;\cdots\hat{f_{i}}\cdots)+(-1)^{m+1}\sum_{i}\eta_{k-1}(e_{1}\cdots e_{m+1-2k},f_{i};\cdots\hat{f_{i}}\cdots)
\end{eqnarray*}

The sum of the equations above is
\begin{eqnarray*}
 &  & \{\Theta,\eta\}_{k}(e_{1},\cdots,e_{m+1-2k};f_{1},\cdots,f_{k})\\
 & = & \sum_{a}(-1)^{a}\rho(e_{a})\eta_{k}(e_{1},\cdots,\hat{e_{a}},\cdots,e_{m+1-2k};f_{1},\cdots,f_{k})\\
 &  & +\sum_{a<b}(-1)^{a+1}\eta_{k}(\cdots,\hat{e_{a}},\cdots,e_{a}\circ e_{b},\cdots;\cdots)\\
 &  & -\sum_{i}\eta_{k-1}(f_{i},e_{1},\cdots,e_{m+1-2k};\cdots,\hat{f_{i}},\cdots)\\
 & = & -(d\eta)_{k}(e_{1},\cdots,e_{m+1-2k};f_{1},\cdots,f_{k})
\end{eqnarray*}

The proof is finished.
\end{proof}
Next, we give the definition of fat Leibniz algebras:
\begin{defn}
Given a Leibniz algebra $L$, if the symmetric product $(\cdot,\cdot)$
is non-degenerate, we call $L$ a fat Leibniz algebra.
\end{defn}
The omni Lie algebra $ol(V)=gl(V)\oplus V$ is obviously a fat Leibniz
algebra. And the space of sections of any Courant algebroid is also
a fat Leibniz algebra.

Actually given any Leibniz algebra $L$ with trivial center, there
is associated a fat Leibniz algebra $\tilde{L}$:
\begin{prop}
Suppose $L$ is a Leibniz algebra with trivial center, then $\tilde{L}\triangleq L/K$
is a fat Leibniz algebra, where $K$ is the kernel of the bilinear
product of $L$, i.e. $K=\{k\in L|(k,e)=0,\ \forall e\in L\}$.\end{prop}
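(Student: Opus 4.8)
The plan is to verify three things in turn: that $K$ is a two-sided ideal of $L$, so that $\tilde{L}=L/K$ is a genuine Leibniz algebra and the symmetric product descends; that the descended symmetric product is well-defined with values in the left center of $\tilde{L}$; and that it is non-degenerate, which is where the hypothesis on the center is used.

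First I would record two elementary identities for the symmetric product $(x,y)=x\circ y+y\circ x$, both consequences of the left Leibniz identity $a\circ(b\circ c)=(a\circ b)\circ c+b\circ(a\circ c)$. Rearranging it into $(a\circ b)\circ c=a\circ(b\circ c)-b\circ(a\circ c)$ and symmetrizing in $x,y$ shows $(x,y)\circ z=0$ for all $z$, so $(x,y)\in Z$. A second short computation gives the invariance $a\circ(x,y)=(a\circ x,y)+(x,a\circ y)$ for all $a,x,y$. These are exactly the identities already used in the proof that $\Theta=d\zeta$, so I expect them to be routine.

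Next I would show $K$ is an ideal. For $k\in K$ and $a\in L$, invariance yields $0=a\circ(k,y)=(a\circ k,y)+(k,a\circ y)=(a\circ k,y)$ for every $y$, so $a\circ k\in K$ and $K$ is a left ideal. Since $(k,a)=k\circ a+a\circ k=0$, we get $k\circ a=-a\circ k\in K$, so $K$ is also a right ideal. Hence $L/K$ is a Leibniz algebra with $\bar{x}\circ\bar{y}=\overline{x\circ y}$ well-defined, and the symmetric product descends to $(\bar{x},\bar{y})=\overline{(x,y)}$, landing in the left center of $\tilde{L}$.

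The crux is non-degeneracy, and I expect the identification of $K\cap Z$ to be the one place needing care. Suppose $(\bar{x},\bar{y})=0$ for all $\bar{y}$, that is, $(x,y)\in K$ for all $y\in L$; since $(x,y)\in Z$ as well, this forces $(x,y)\in K\cap Z$. The key observation is that $K\cap Z$ is exactly the (two-sided) center of $L$: an element $z\in Z$ already satisfies $z\circ e=0$, so $z\in K$ precisely when $e\circ z=0$ for all $e$ too, i.e. when $z$ is central. With trivial center we conclude $(x,y)=0$ for all $y$, hence $x\in K$ and $\bar{x}=0$. The main obstacle is therefore checking this identification and the directions of the implications; once it is in place the non-degeneracy, and with it the claim that $\tilde{L}$ is fat, is immediate.
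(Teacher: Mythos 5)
Your proposal is correct and follows essentially the same route as the paper: both establish that $K$ is a two-sided ideal via the invariance of the symmetric product together with $k\circ a=-a\circ k$, and both obtain non-degeneracy by noting that $(x,y)$ lies in the left center, so $(x,y)\in K$ forces it into the right center as well, whence the trivial-center hypothesis gives $(x,y)=0$. Your explicit identification of $K\cap Z$ with the two-sided center is just a slightly more packaged statement of the paper's final step.
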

\begin{proof}
Since the product of $L$ is invariant: 
\[
\tau(e_{1})(k,e_{2})=(e_{1}\circ k,e_{2})+(k,e_{1}\circ e_{2}),\quad\forall e_{1},e_{2}\in L,\ \forall k\in K,
\]
it follows that 
\[
(e_{1}\circ k,e_{2})=0,\quad\forall e_{2}\in L,
\]
so $e_{1}\circ k\in K$. Furthermore since $e_{1}\circ k+k\circ e_{1}=(k,e_{1})=0$,
so $k\circ e_{1}=-e_{1}\circ k$ is also in $K$. Thus $K$ is an
ideal of $L$. 

The Leibniz bracket of $L$ naturally induces a bracket on $L/K$:
\[
\bar{e}_{1}\circ\bar{e}_{2}\triangleq\overline{e_{1}\circ e_{2}},
\]
where $\bar{e}$ is the equivalent class of $e\in L$ in $L/K$. Suppose
there exists $\bar{k}\in L/K$ such that $(\bar{k},\bar{e})=0,\ \forall\bar{e}\in L/K$,
i.e. $(k,e)\in K,\ \forall e\in L$. Since $(k,e)$ is in the left
center of $L$, $(k,e)\in K$ implies that $(k,e)$ is also in the
right center of $L$. So $(k,e)=0$ by the assumption that the center
of $L$ is trivial. It follows that $k$ itself is in $K$, $\bar{k}=0\in L/K$.
As a result, the bilinear product on $L/K$ is non-degenerate.
\end{proof}
Finally we give the main theorem of this section:
\begin{thm}
\label{thm:derived bracket Leibniz} With the above notations, we
have 
\[
(e_{1}\circ e_{2})^{\flat}=-\{\{\Theta,e_{1}^{\flat}\},e_{2}^{\flat}\}.
\]
 In particular, if $L$ is a fat Leibniz algebra, then the Leibniz
bracket can be represented as a derived bracket:
\[
e_{1}\circ e_{2}=-\{\{\Theta,e_{1}^{\flat}\},e_{2}^{\flat}\}^{\sharp},
\]

where $(\bullet)^{\sharp}:Im((\bullet)^{\flat})\rightarrow L$ is
the (partial) inverse map of $(\bullet)^{\flat}$, i.e.
\[
((\phi)^{\sharp})^{\flat}\triangleq\phi,\quad\forall\phi\in Im((\bullet)^{\flat}).
\]
\end{thm}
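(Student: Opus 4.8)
The plan is to establish the cochain identity $(e_1\circ e_2)^\flat = -\{\{\Theta, e_1^\flat\}, e_2^\flat\}$ first and then read off the ``in particular'' statement. For the latter, when $L$ is fat the pairing is non-degenerate, so $(\bullet)^\flat$ is injective and its partial inverse $(\bullet)^\sharp$ is linear on $Im((\bullet)^\flat)$; since the right-hand side of the identity equals $-(e_1\circ e_2)^\flat \in Im((\bullet)^\flat)$, applying $(\bullet)^\sharp$ to both sides yields $e_1\circ e_2 = -\{\{\Theta, e_1^\flat\}, e_2^\flat\}^\sharp$ at once.

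To prove the $\flat$-identity I would first apply the preceding proposition $\{\Theta,\eta\} = -d\eta$ with $\eta = e_1^\flat\in\tilde{C}^1(L)$ to get $\{\Theta, e_1^\flat\} = -d(e_1^\flat)$. By bilinearity of the bracket this reduces the whole claim to the single equation
\[
\{d(e_1^\flat),\, e_2^\flat\} = (e_1\circ e_2)^\flat .
\]
Writing $\alpha\triangleq d(e_1^\flat)$, a $2$-cochain, the bracket with the $1$-cochain $e_2^\flat$ is the $1$-cochain $\alpha\bullet e_2^\flat + \alpha\diamond e_2^\flat - e_2^\flat\diamond\alpha$, and I must show it sends $x\mapsto (e_1\circ e_2, x)$.

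The computation proceeds in three steps. First, unwinding $d = d_0 + \delta$ on $e_1^\flat$ gives the two components $\alpha_0(x,y) = \rho(x)(e_1,y) - \rho(y)(e_1,x) - (e_1, x\circ y)$ and $\alpha_1(f) = (e_1,f)$. Second, I evaluate the three summands of the bracket at a single argument $x$: the term $e_2^\flat\diamond\alpha$ vanishes for degree reasons (the $\diamond$-formula would call for a component of $e_2^\flat$ above degree $1$), while the definitions of $\bullet$ and $\diamond$ give $(\alpha\bullet e_2^\flat)_0(x) = \alpha_0(x,e_2)$ and $(\alpha\diamond e_2^\flat)_0(x) = (e_1,(e_2,x))$, the latter produced by $\alpha_1$ via the operation $\circ$ and its Leibniz extension $\check{\alpha_1}$ applied to the degree-one element $(e_2,x)\in Z$.

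The third step, adding and simplifying, is the crux. Expanding $(e_1,(e_2,x)) = (e_1, e_2\circ x) + (e_1, x\circ e_2)$ and substituting $\alpha_0(x,e_2)$, two pairings cancel and the sum collapses to $\rho(x)(e_1,e_2) - \rho(e_2)(e_1,x) + (e_1, e_2\circ x)$. Invoking the invariance of the symmetric product, $\rho(e_2)(e_1,x) = (e_2\circ e_1, x) + (e_1, e_2\circ x)$, removes the last two terms and leaves $\rho(x)(e_1,e_2) - (e_2\circ e_1, x)$. Finally, because $(e_1,e_2)$ lies in the left center $Z$, the triviality of the $Z$-action gives $\rho(x)(e_1,e_2) = x\circ(e_1,e_2) = ((e_1,e_2),x) = (e_1\circ e_2, x) + (e_2\circ e_1, x)$, so the stray $(e_2\circ e_1, x)$ cancels and we are left with $(e_1\circ e_2, x)$, as required; these are exactly the two facts already used to prove $\Theta = d\zeta$. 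I expect the main obstacle to be the correct reading of the $\diamond$-term: the entire identity hinges on the $\alpha_1$-component of $d(e_1^\flat)$ producing precisely $(e_1,(e_2,x))$ so that the cross-pairings cancel, and on converting the residual $\rho(x)(e_1,e_2)$ into a symmetric pairing via the triviality of the central action.
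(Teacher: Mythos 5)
Your proposal is correct, but it is organized differently from the paper's proof. The paper does \emph{not} invoke the proposition $\{\Theta,\eta\}=-d\eta$ inside this proof: it computes the inner bracket $\{\Theta,e_{1}^{\flat}\}$ directly from the definitions of $\bullet$ and $\diamond$ (the term $e_{1}^{\flat}\diamond\Theta$ vanishing for the same degree reasons you cite), simplifies it at once via invariance to the closed form $\{\Theta,e_{1}^{\flat}\}_{0}(e_{2},e_{3})=(e_{1}\circ e_{2},e_{3})$, $\{\Theta,e_{1}^{\flat}\}_{1}(f)=-(e_{1},f)$, and then the outer bracket evaluation takes only two lines. You instead use the proposition as a key lemma to replace the inner bracket by $-d(e_{1}^{\flat})$, compute $d(e_{1}^{\flat})$ from the definition of $d=d_{0}+\delta$, and push the unsimplified expression through the outer bracket, performing all the invariance and left-center manipulations at the end; your intermediate $2$-cochain agrees with the paper's by invariance, as it must, and your final simplification reproduces exactly the identity $(e_{1}\circ e_{2},x)$. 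Your route buys conceptual transparency (the Leibniz bracket literally appears in the classical derived-bracket form $\{d(e_{1}^{\flat}),e_{2}^{\flat}\}$, with $d=-\{\Theta,\cdot\}$) and reuses work already done, while the paper's route is self-contained and its simplified intermediate cochain makes the second computation shorter; it also exhibits the useful explicit formula for $\{\Theta,e^{\flat}\}$. Two minor points you should make explicit: (i) for the outer bracket to be defined you need $d(e_{1}^{\flat})$ to be representable --- this is exactly the paper's proposition that $\tilde{C}(L)$ is a subcomplex (or it follows from your formula for $\alpha_{0}$ together with invariance, which gives $\tilde{\alpha}_{0}(x)=e_{1}\circ x$); (ii) what you call ``triviality of the $Z$-action'' in the last step is really the defining property of the left center ($f\circ x=0$ for $f\in Z$), combined with the definition of $\rho$ and bilinearity of the pairing; in the paper's terminology, ``$Z$-trivial'' refers to the action of $Z$ on the coefficients $S^{\bullet}(Z)$, which is not what is used there. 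Neither point is a gap, since both facts are established in the paper before the theorem.
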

\begin{proof}
$\{\Theta,e_{1}^{\flat}\}$ is a 2 cochain:
\begin{eqnarray*}
 &  & \{\Theta,e_{1}^{\flat}\}_{0}(e_{2},e_{3})\\
 & = & \langle\tilde{\Theta_{0}}(e_{2},e_{3}),\tilde{e_{1}^{\flat}}\rangle+\Theta_{1}(e_{2})\circ e_{1}^{\flat}(e_{3})-\Theta_{1}(e_{3})\circ e_{1}^{\flat}(e_{2})\\
 & = & (e_{2}\circ e_{3},e_{1})-(e_{2},(e_{1},e_{3}))+(e_{3},(e_{1},e_{2}))\\
 & = & -(e_{2}\circ e_{1},e_{3})+(e_{3},e_{1}\circ e_{2}+e_{2}\circ e_{1})\\
 & = & (e_{1}\circ e_{2},e_{3})
\end{eqnarray*}

\[
\{\Theta,e_{1}^{\flat}\}_{1}(f)=\langle\tilde{\Theta_{1}}(f),\tilde{e_{1}^{\flat}}\rangle=-(e_{1},f)
\]

We see that $\{\Theta,e_{1}^{\flat}\}$ is a representable cochain.

$\{\{\Theta,e_{1}^{\flat}\},e_{2}^{\flat}\}$ is a 1 cochain:
\begin{eqnarray*}
 &  & \{\{\Theta,e_{1}^{\flat}\},e_{2}^{\flat}\}_{0}(e_{3})\\
 & = & \langle\tilde{\{\Theta,e_{1}^{\flat}\}_{0}}(e_{3}),\tilde{e_{2}^{\flat}}\rangle+\{\Theta,e_{1}^{\flat}\}_{1}\circ e_{2}^{\flat}(e_{3})\\
 & = & (e_{1}\circ e_{3},e_{2})-(e_{1},(e_{2},e_{3}))\\
 & = & -(e_{1}\circ e_{2},e_{3})
\end{eqnarray*}

So $(e_{1}\circ e_{2})^{\flat}(e_{3})=-\{\{\Theta,e_{1}^{\flat}\},e_{2}^{\flat}\}(e_{3})=(e_{1}\circ e_{2},e_{3})$.

The proof is finished.\end{proof}
\begin{rem}
As mentioned in Remark \ref{Remark:standard complex}, $C(L)$ is
isomorphic to the standard complex of the Courant-Dorfman algebra
$S^{\bullet}(Z)\otimes L$. Actually theorem \ref{thm:derived bracket Leibniz}
is true for $e_{1},e_{2}\in S^{\bullet}(Z)\otimes L$. In \cite{Roytenberg},
Roytenberg proved that the Dorfman bracket of a non-degenerate Courant-Dorfman
algebra (i.e. the symmetric product is strongly non-degenerate) is
a derived bracket. Our theorem \ref{thm:derived bracket Leibniz}
can be viewed as a generalization of his result, since the symmetric
product of $S^{\bullet}(Z)\otimes L$ is only non-degenerate, but
not strongly non-degenerate. 

\end{rem}


\begin{thebibliography}{10}
\bibitem{AlekseevXu}Alekseev, Anton and Xu, Ping, $\emph{{\mbox{Derived brackets and Courant algebroids}}}$,
Unpublished manuscript, available at https://www.math.psu.edu/ping/papers.html
(2001).

\bibitem{BenayadiHidri} Benayadi, Saïd and Hidri, Samiha, $\emph{{\mbox{Quadratic Leibniz algebras}}}$,
Journal of Lie Theory 24.3 (2014): 737-759.

\bibitem{Bloh}Bloh, A., $\emph{{\mbox{On a generalization of the concept of Lie algebra}}}$,
Dokl. Akad. Nauk SSSR. Vol. 165. No. 3 (1965), 471-473.

\bibitem{Cai} Cai, Xiongwei, $\emph{{\mbox{H-standard cohomology for Courant-Dorfman algebras and Leibniz algebras}}}$,
arXiv preprint math.RA/1612.05297.

\bibitem{KS}Kosmann-Schwarzbach, Yvette, $\emph{{\mbox{Jacobian quasi-bialgebras and quasi-Poisson Lie groups}}}$,
Mathematical aspects of classical field theory, pages 459\textendash{}489.
Contemp. Math., 132, Amer. Math. Soc., 1992.

\bibitem{KS04}Kosmann-Schwarzbach, Yvette, $\emph{{\mbox{Derived brackets}}}$,
Letters in Mathematical Physics 69.1 (2004): 61-87.

\bibitem{LecomteRoger}Lecomte, P. and Roger, C., $\emph{{\mbox{Modules et cohomologie des bigébres de Lie}}}$,
Comptes rendus Acad. Sci. Paris, 310:405\textendash{}410, 1990.

\bibitem{LiuWX} Liu, Zhang-Ju, Weinstein, Alan and Xu, Ping, $\emph{{\mbox{Manin triples for Lie bialgebroids}}}$,
J. Differential Geom 45.3 (1997): 547-574.

\bibitem{Loday}Loday, J.L., $\emph{{\mbox{Une version non commutative des algèbres de Lie}}}$,
L'Ens. Math. (2), 39 (1993), 269-293.

\bibitem{Roytenberg} Roytenberg, Dmitry, $\emph{{\mbox{Courant\textendash Dorfman algebras and their cohomology}}}$,
Letters in Mathematical Physics 90.1-3 (2009), 311-351.

\bibitem{Roytenberg02}Roytenberg, Dmitry, $\emph{{\mbox{On the structure of graded symplectic supermanifolds and Courant algebroids}}}$,
Contemporary Mathematics 315 (2002): 169-186.\end{thebibliography}
\end{document}